\documentclass[preprint,12pt,authoryear]{elsarticle}

\usepackage[a4paper,verbose,
tmargin=2.5cm,bmargin=2.5cm,
lmargin=2.5cm,rmargin=2.5cm]{geometry}

\usepackage[hidelinks,colorlinks]{hyperref} 
\hypersetup{
	colorlinks=true,
	linkcolor=blue,   
	citecolor=blue,   
	urlcolor=blue     
}

\usepackage{amsmath,amssymb,amsthm} 
\usepackage{mathtools}
\numberwithin{equation}{section}    
\usepackage{algorithm}
\usepackage{algpseudocode}          
\usepackage{graphicx}               
\usepackage{booktabs}               
\usepackage{hyperref}               
\usepackage{booktabs}     
\usepackage[table]{xcolor} 
\usepackage{siunitx}      
\usepackage{caption}      

\newtheorem{theorem}{Theorem}[section]

\newtheorem{assumption}{Assumption}[section]

\DeclarePairedDelimiterX{\set}[1]{\{}{\}}{\setargs{#1}}  
\NewDocumentCommand{\setargs}{>{\SplitArgument{1}{;}}m}
{\setargsaux#1}
\NewDocumentCommand{\setargsaux}{mm}
{\IfNoValueTF{#2}{#1} {#1\,\delimsize|\,\mathopen{}#2}}

\DeclareMathOperator*{\argmin}{argmin}

\journal{Computers \& Chemical Engineering}

\begin{document}
	
	\begin{frontmatter}
		
		\title{A Sensitivity-Based Method for Bilevel Optimization Problems: Theoretical Analysis and Computational Performance}
		
		\author[inst1]{Eduardo Nolasco\corref{cor1}}
		\author[inst1,inst2]{Ross D. King}
		\author[inst3]{Vassilios S. Vassiliadis}
		
		\cortext[cor1]{Corresponding author. Email: eduardo.nolasco@cantab.net}
		
		\affiliation[inst1]{organization={Department of Chemical Engineering and Biotechnology, University of Cambridge},
			city={Cambridge}, 
			country={United Kingdom}}
		\affiliation[inst2]{organization={Department of Computer Science and Engineering, Chalmers University of Technology},
			city={Gothenburg}, 
			country={Sweden}}
		\affiliation[inst3]{organization={Independent Academic Consultant, Retired from the University of Cambridge},
			city={Larnaca}, 
			country={Cyprus}}
		
		\begin{abstract}
Bilevel optimization provides a powerful framework for modelling hierarchical decision-making systems. This work presents a sensitivity-based algorithm that addresses the bilevel structure directly by treating the lower-level optimal solution as an implicit, locally differentiable function of the upper-level variables, thereby avoiding classical single-level reformulations. Under standard regularity assumptions on the lower level, an adjoint-based representation of the reduced upper-level gradient is derived, replacing explicit construction of the sensitivity Jacobian with a single linear adjoint solve per iteration and reducing gradient evaluation cost by a factor equal to the upper-level dimension. The reduced problem is solved within an Augmented Lagrangian framework, with inner subproblems managed by an L-BFGS-B quasi-Newton solver. Convergence to KKT points of the reduced problem is established, and these points are shown to be equivalent to S-stationary solutions of the associated mathematical programme with complementarity constraints under MPEC-LICQ. Computational experiments on benchmark bilevel problems validate the method's correctness and robustness, and demonstrate the effectiveness of a pragmatic dual-criterion stopping condition in handling the asymmetric primal-dual convergence rates characteristic of augmented Lagrangian methods.
		\end{abstract}
		
		\begin{keyword}
			Bilevel Optimization \sep Sensitivity Analysis \sep Augmented Lagrangian Method \sep Adjoint Method \sep Strong Stationarity \sep Mathematical Program with Complementarity Constraints
		\end{keyword}
		
	\end{frontmatter}
	
	
	\section{Introduction}\label{Sect:Introduction}
	
	Bilevel optimization problems (BLPs) are mathematical programs in which one optimization problem is constrained by the solution of a subordinate optimization problem. This hierarchical structure defines two levels: an upper-level (or leader) problem, whose decisions influence the feasible set or objective of a lower-level (or follower) problem. The solution to the lower-level problem, in turn, feeds back into the upper-level decision-making process, creating a coupled dependency between the two levels.
	
	The origin of BLPs can be traced back to leader-follower games introduced by \cite{Stackelberg1934} within the economic context. They were later introduced to the operation research community by \cite{Bracken1973} as optimization problems with an optimization problem in their constraints, and have since found widespread application in multiple disciplines. In chemical engineering, notable examples include the optimal design of processes involving thermodynamic equilibrium \citep{Clark1990}, parameter estimation in phase equilibrium problems \citep{Bollas2009}, capacity planning \citep{Garcia-Herreros2016}, supply chain management \citep{Yue2014}, among others.
	
	Despite their practical relevance, BLPs pose significant computational challenges. Their feasible region is often discontinuous and non-differentiable, rendering the overall problem nonconvex, even when each level is convex. In addition, there exist multiple, non-equivalent formulations of BLPs, which complicate the derivation of general optimality conditions. A common approach is to reformulate the problem as a single-level optimization problem. However, such reformulations are not always faithful: specific regularity and structural assumptions must be satisfied to preserve equivalence with the original bilevel structure \citep{Dempe2015}. Classical approaches include KKT-based reformulations leading to mathematical programs with complementarity constraints, exact global methods based on bounding schemes, multiparametric programming for problems with linear or quadratic lower levels, and data-driven surrogate approaches; each carries significant limitations in the general smooth nonlinear setting considered here, as reviewed in Section~\ref{Sect:Formulation}.
	
	This work proposes a sensitivity-based descent algorithm for solving deterministic, continuous bilevel optimization problems in which the lower-level problem is convex. The method leverages parametric sensitivity analysis to treat the lower-level optimal solution as an implicit, locally differentiable function of the upper-level variables, enabling the construction of descent directions for the upper-level objective without recourse to classical single-level reformulations. Gradient evaluation is performed via an adjoint system that avoids explicit formation of the sensitivity Jacobian, reducing the per-iteration gradient cost to a single linear solve independent of the upper-level dimension. The reduced problem is solved within a robust Augmented Lagrangian framework, where inner subproblems are managed by an L-BFGS-B quasi-Newton solver with a strong Wolfe line search. Convergence to KKT points of the reduced problem is established under standard regularity assumptions, and these points are shown to be equivalent to S-stationary solutions of the associated MPCC reformulation under MPEC-LICQ. A pragmatic dual-criterion stopping condition is introduced to address the asymmetric primal-dual convergence rates characteristic of augmented Lagrangian methods.
	
	The remainder of the paper is structured as follows. Section \ref{Sect:Formulation} discusses alternative formulations of BLPs and reviews classical solution approaches, including KKT-based reformulations, value-function methods, exact global bounding schemes, multiparametric programming, and data-driven surrogate approaches. Section \ref{Sect:Method} introduces the proposed sensitivity-based method, including the adjoint-based gradient formulation, the Augmented Lagrangian algorithm, and the convergence and S-stationarity equivalence results. In Section \ref{Sect:Tests} presents the computational experiments, covering implementation details, illustrative examples, and benchmark results on the BOLIB library. Section \ref{Sect:Conclusions} concludes the paper and outlines directions for future research.
	
	\section{BLP formulation and classical solution approaches}\label{Sect:Formulation}
	\subsection{Bilevel optimization problems formulation}
	The general, yet inherently ambiguous, formulation of a BLP can be expressed as:
	\begin{equation}\label{eq:ambiguous}
		\begin{aligned}
			\min_{x \in X} & \ F(x,y) \\
			\text{s.t.} & \ G(x,y) \leq 0, \\
			& \ y \in \argmin_{y \in Y} \set{ f(x,y) ; g(x,y) \leq 0 },
		\end{aligned}
	\end{equation}
	where $x \in X \subset \mathbb{R}^n$ denotes the upper-level variables, $y \in Y \subset \mathbb{R}^m$ denotes the lower-level variables. The functions $F, f:\mathbb{R}^n \times \mathbb{R}^m \to \mathbb{R}$ define the upper- and lower-level objectives, while $G:\mathbb{R}^n \times \mathbb{R}^m \to \mathbb{R}^r$ and $g:\mathbb{R}^n \times \mathbb{R}^m \to \mathbb{R}^s$ denote the vector-valued inequality constraints functions at the upper and lower-level, respectively. To simplify the notation and exposition, equality constraints are omitted in this work noting that they can be represented by pairs of inequalities. Throughout this work, uppercase letters denote upper-level functions, while lowercase letters denote lower-level functions.
	
	Despite its apparent simplicity, the BLP \eqref{eq:ambiguous} is not well-posed whenever the lower-level problem admits multiple optimal solutions. Consider the illustrative example due to \cite{Lucchetti1987} whose upper-level objective is $F(x,y) = x^2 + y^2$ and lower-level problem is $\argmin_{y} \set{-xy; 0 \leq y \leq 1}$; if the upper-level decision is $x=0$, then any $y \in [0,1]$ is optimal for the lower-level, thus rendering the BLP ill-defined.
	
	To appropriately formulate the BLP, we first introduce the \emph{parametric} lower-level problem:
	\begin{equation}\label{eq:ll}
		\begin{aligned}
			\min_{y \in Y} & \ f(x,y) \\
			\text{s.t.} & \ g(x,y) \leq 0,
		\end{aligned}
	\end{equation}
	for a given value of the upper-level variable $x$. The solution set of this problem defines a set-valued mapping $\Psi: \mathbb{R}^n \rightrightarrows \mathbb{R}^m$ given by
	\begin{equation}\label{eq:Psi(x)}
		\Psi(x) := \argmin_{y \in Y} \set{ f(x,y) ;  g(x,y) \leq 0 }.
	\end{equation}
	
	If the upper-level can influence the lower-level's decision when multiple solutions exist, then one obtains the \textit{optimistic} bilevel optimization problem:
	\begin{equation}\label{eq:op_BLP}
		\begin{aligned}
			\min_{x, y} \  & \ F(x,y) \\
			\text{s.t.} & \ G(x,y) \leq 0, \\
			& \ x \in X, \\
			& \ y \in \Psi(x).
		\end{aligned}
	\end{equation}
	It is important to note that the operator $\min_{x, y}$ in this context does not imply a simultaneous optimization over the variables $x$ and $y$. Instead, the constraint $y \in \Psi(x)$ dictates a sequential process where, for a given $x$, the upper-level selects a specific $y$ from the set $\Psi(x)$ that is most favorable to its own objective. The sets $X$ and $Y$ are typically compact sets defined by box constraints on the variables which are treated as inequality constraints and included in $G$ and $g$.
	
	If the leader cannot influence the lower-level decision making process, further assumptions or hierarchical selection rules must be imposed leading to the \textit{pessimistic} formulation of BLPs \citep{Dempe2002, Wiesemann2013}. Throughout this work, we consider the optimistic case and assume that $\Psi(x)$ is single-valued, ensuring that the BLP \eqref{eq:op_BLP} is well defined.
	
	Note that, by definition, the mapping $\Psi$ in \eqref{eq:Psi(x)} denotes the set of \emph{global minimizers} of the parametric lower-level problem \eqref{eq:ll}. Thus, any algorithm for solving bilevel problems must ensure global optimality at the lower-level. If local minimizers or stationary points of the lower-level problem are admitted, then the solution of the resulting relaxed problem will generally differ from that of the original problem \citep{Mirrlees1991}.
	
	\subsection{Classical solution methods}
	Bilevel optimization problems are intrinsically difficult to analyze and solve. In particular, optimality conditions based on classical nonlinear programming concepts (stationarity, constraints qualifications or duality) are not readily available for the bilevel case. Therefore, the usual approach to solve the BLP \eqref{eq:op_BLP} is to reformulate as a single-level optimization problem.
	
	One such reformulations replaces the lower-level problem \eqref{eq:ll} with its Karush-Kuhn-Tucker (KKT) optimality conditions, which are then included as constraints in the upper-level problem. If the functions $f, g $ are differentiable and convex, and if a suitable constraint qualification holds for all $x$ at $y \in \Psi(x)$, then the bilevel problem \eqref{eq:op_BLP} can be reformulated as its \emph{KKT transformation}:
	\begin{equation}\label{eq:BLP MPCC}
		\begin{aligned}
			\min_{x, y, \lambda} & \ F(x,y) \\
			\text{s.t.} & \ G(x,y) \leq 0 \\
			& \ \nabla_y f(x,y) + \lambda^\top \nabla_y g(x,y) =0, \\
			& \ g(x,y) \leq 0 \\
			& \ 0 \leq \lambda, \ \lambda^\top g(x,y)=0 \\
			& \ y \in Y, \ x \in X.
		\end{aligned}
	\end{equation}
	
	Problem \eqref{eq:BLP MPCC} is a \emph{mathematical program with complementarity constraints (MPCC)}. Both bilevel optimization problems and MPCCs are special cases of the broader class of \emph{mathematical programs with equilibrium constraints (MPECs)} \citep{Kocvara2004}. Due to the complementarity constraint,  MPCCs violate standard constraint qualifications at any feasible point, which makes the derivation of optimality conditions a challenging task. To address these challenges, several generalized \emph{stationarity concepts} have been developed within the MPEC framework \citep{Outrata1990, Scheel2000}. The strongest among these is \emph{Strong stationarity (S-stationarity)}. A feasible point of the MPCC reformulation \eqref{eq:BLP MPCC} is called S-stationary if its standard KKT conditions are satisfied. This means there exist Lagrange multipliers such that the gradient of the MPEC Lagrangian is zero, and the multipliers associated with the inequality constraints are all non-negative. An S-stationary point is a highly desirable solution, as it is the most rigorous of the MPCC stationarity conditions. This MPCC reformulation has been studied in \cite{Gumus2001}.
	
	Another classical reformulation of the bilevel problem \eqref{eq:op_BLP} is the \emph{optimal value function} reformulation, originally introduced by \cite{Outrata1990}. In this formulation, the optimal value function associated with the lower-level problem \eqref{eq:ll} is defined as
	\begin{equation}
		\varphi(x) = \min_{y \in Y} \set{f(x,y) ; g(x,y) \leq 0},
	\end{equation}
	and is subsequently introduced as a constraint in the upper-level problem. By combining this with the feasible set of the lower-level problem, the bilevel problem is reformulated as the following single-level problem:
	\begin{equation}\label{eq:optimal value BLP}
		\begin{aligned}
			\min_{x, y} \  & \ F(x,y) \\
			\text{s.t.} \ & \ G(x) \leq 0 \\
			& \ f(x,y) \leq \varphi(x) \\
			& \ g(x,y) \leq 0 \\
			& \ y \in T \\
			& \ x \in X.
		\end{aligned}
	\end{equation}
	Problem \eqref{eq:optimal value BLP} is fully equivalent to the original bilevel problem \eqref{eq:op_BLP} both in terms of local and global solutions \citep{Dempe2015}. However, direct solution of \eqref{eq:optimal value BLP} is computationally intractable in general, since $\varphi(x)$ is implicit, nonsmooth, and nonconvex, and no closed-form expression is available. This has motivated two distinct strategies in the global optimization literature, both of which use $\varphi(x)$ as a conceptual object without computing it explicitly.
	
	The first strategy preserves the bilevel structure and constructs convergent bounds on $\varphi(x)$ through nested bounding problems. \cite{Kleniati2014} proposed a branch-and-sandwich algorithm that maintains separate upper- and lower-level bounding problems, sandwiching $\varphi(x)$ from above and below without collapsing the problem to the single-level form \eqref{eq:optimal value BLP}. This was later extended to mixed-integer bilevel problems \citep{Kleniati2015}.
	
	The second strategy avoids explicit reference to $\varphi(x)$ altogether, instead constructing convergent approximations of the lower-level optimality constraint through discretisation-based bounding schemes. \cite{Mitsos2008} introduced a bounding algorithm for the global solution of continuous bilevel programmes with nonconvex lower levels. The approach constructs convergent lower and upper bounds by solving a relaxed single-level programme augmented with parametric upper bounds on the lower-level optimal value, guaranteeing $\varepsilon$-optimality without branching. This framework was subsequently extended to handle mixed-integer variables \citep{Mitsos2010}, and later adapted to accommodate lower-level equality constraints \citep{DjelassiMitsos2019}, which arise naturally in process systems engineering applications where the lower-level problem encodes equilibrium conditions or steady-state process models.
	
	Another approach to solve the optimistic bilvel problem \eqref{eq:op_BLP}, developed within the process systems engineering community, exploits the structure of bilevel problems with linear or quadratic lower levels through multiparametric programming. \cite{Faisca2007} first proposed solving the lower-level problem as a multiparametric programme parametrized by the upper-level variables, reducing the bilevel problem to a sequence of independent LP or QP problems solvable to global optimality. This was subsequently extended to mixed-integer linear and quadratic lower levels by \cite{Avraamidou2019}, providing exact global solutions for B-MILP and B-MIQP problems. A general reference for multiparametric programming in the multilevel case is found in \cite{Avraamidou2022}.
	
	In contrast to exact methods, data-driven approaches sacrifice optimality guarantees in exchange for the ability to handle general nonlinear and black-box problem structures. \cite{Beykal2020} proposed DOMINO, a framework that reformulates the bilevel problem as a single-level grey-box optimization problem by sampling the upper-level objective at points where the lower-level has been solved to global optimality, and applying a derivative-free solver to the resulting surrogate. DOMINO provides bilevel feasibility guarantees and near-optimal solutions across a range of problem classes including B-MINLP, but does not provide a certificate of global optimality, as the surrogate approximation of the upper-level objective does not in general reproduce the true bilevel structure. Surrogate-assisted evolutionary approaches \citep{Islam2017, Sinha2022} follow a similar paradigm, constructing Kriging or other regression models of the lower-level response and embedding these within population-based search; these methods share the same limitation that solution quality depends on surrogate accuracy, which degrades in high dimensions.
	
	The single-level reformulations and solution strategies reviewed above each carry significant limitations for the general smooth nonlinear bilevel setting considered here. The KKT reformulation \eqref{eq:BLP MPCC} produces an MPCC that violates standard constraint qualifications at every feasible point, complicating both optimality theory and algorithmic design. The value-function reformulation \eqref{eq:optimal value BLP}, while theoretically equivalent to the original problem, is computationally intractable: $\phi(x)$ is implicit, nonsmooth, and nonconvex, with no closed-form expression available. Exact global methods that work around these difficulties — whether through bounding schemes or parametric solution maps — either require certified global optimality of the lower-level problem at every iteration, incurring substantial overhead, or are restricted to lower levels with linear or quadratic structure. Data-driven and surrogate-based approaches handle general nonlinear structures but approximate the bilevel objective rather than operating on it directly, providing no certificate on the gap to the true optimum.
	
	The method proposed in this work takes a different route. Under certain regularity conditions (see next section), the lower-level solution $\bar{y}(x)$ is a continuously differentiable implicit function of $x$, and its sensitivity with respect to $x$ can be computed efficiently via an adjoint system. This allows the bilevel problem to be solved directly as a smooth optimization problem in $x$ alone, without constructing any single-level reformulation, while retaining the theoretical guarantees of KKT-based optimality conditions as developed in the following section.

\section{The Sensitivity-Based Solution Method}\label{Sect:Method}
This section details the proposed sensitivity-based algorithm for solving the optimistic bilevel problem \eqref{eq:op_BLP}. Our method belongs to the class of gradient-based algorithms and follows a nested approach where an outer loop updates the upper-level variables and an inner loop solves the lower-level problem for a given upper-level decision. The main idea is to circumvent the single-level reformulations discussed previously by treating the lower-level problem as a parametric optimization problem and the upper-level as an implicit problem.

\subsection{The Implicit Upper-Level Problem}
The algorithm is based on the insight that the lower-level's optimal decision can be viewed as an \emph{implicit function} of $x$, denoted $\bar{y}(x)$. This allows to transform the bilevel problem \eqref{eq:op_BLP} into an equivalent single-level problem:
\begin{equation}\label{eq:Implicit_BLP}
	\begin{aligned}
		\min_{x \in X} \quad & F(x, \bar{y}(x))\\
		\text{s.t.} \quad & G(x, \bar{y}(x)) \le 0.
	\end{aligned}
\end{equation}

While this problem cannot be solved directly, as $\bar{y}(x)$ is not known in closed form, this formulation enables a gradient-based solution strategy. The total derivatives of the functions in the above problem can be computed via sensitivity analysis, as detailed in the subsequent sections. 

For this transformation to be valid and for the implicit function $\bar{y}(x)$ to be locally unique and continuously differentiable, we impose the following standard assumptions.

\begin{assumption}[Regularity Conditions]\label{ass:main}
	Let $(\bar{x}, \bar{y})$ be a feasible point of the bilevel problem \eqref{eq:op_BLP}.
	\begin{enumerate}
		\item[(a)] \textbf{Smoothness:} The functions $F, G, f,$ and $g$ are twice continuously differentiable in a neighborhood of $(\bar{x}, \bar{y})$.
		\item[(b)] \textbf{Lower-Level Convexity:} For any feasible $x$ in the neighborhood of $\bar{x}$, the lower-level problem \eqref{eq:ll} is strictly convex.
		\item[(c)] \textbf{Lower-Level Regularity:} For any feasible $x$ in the neighborhood of $\bar{x}$, the lower-level solution $\bar{y}(x)$ satisfies the Linear Independence Constraint Qualification (LICQ), the Second-Order Sufficient Condition (SOSC), and the Strict Complementarity Condition (SCC).
	\end{enumerate}
\end{assumption}

The above assumptions ensure, via the Implicit Function Theorem, that the solution map $\bar{y}(x)$ and its associated Lagrange multipliers $\bar{\lambda}(x)$ are continuously differentiable functions in the neighborhood of $\bar{x}$. This differentiability is fundamental to compute derivatives of the upper-level problem \eqref{eq:Implicit_BLP}. For notational clarity throughout the remainder of this work, we omit the explicit dependency of the optimal lower-level solution $\bar{y}$ and its associated multipliers $\bar{\lambda}$ on $x$.

The strict convexity assumption in Assumption \ref{ass:main}(b) excludes the important class of bilevel optimization problems whose lower-level is a linear program (LP). To accommodate this class of problems within our framework, we employ a standard regularization technique. The linear objective $f(x,y) = c(x)^T y$ is replaced by the regularised objective $f_{\varepsilon}(x,y) = c(x)^T y + \varepsilon \|y\|^2$, where $\varepsilon$ is a small, positive constant (e.g., $10^{-6}$). This ensures the LP has a unique solution and satisfies the necessary regularity conditions for sensitivity analysis, allowing the rest of the algorithm to be applied without modification.

\subsection{Sensitivity Analysis of the Lower Level}
The optimality of the lower-level problem \eqref{eq:ll} for a fixed $x$ is characterized by its Karush-Kuhn-Tucker (KKT) conditions. These are derived from the problem's Lagrangian function:
\begin{equation}\label{eq:ll_lagrangian}
	\mathcal{L}_f(x, y, \lambda) = f(x, y) + \lambda^{\top} g(x, y),
\end{equation}
where $\lambda \in \mathbb{R}^s$ are the Lagrange multipliers. The KKT conditions must hold at the optimal solution $(\bar{y}, \bar{\lambda})$. The lower-level KKT conditions are:
\begin{subequations}\label{eq:ll_kkt}
	\begin{align}
		\nabla_{y} \mathcal{L}_f(x, \bar{y}, \bar{\lambda}) &= 0, \label{eq:KKT_stationarity} \\
		g(x, \bar{y}) &\le 0, \label{eq:KKT_feasibility}\\
		\bar{\lambda} &\ge 0, \label{eq:KKT_dual}\\
		\bar{\lambda}_i g_i(x, \bar{y}) &= 0, \quad \forall i=1, \dots, s. \label{eq:KKT_complementarity}
	\end{align}
\end{subequations}

Because of the complementarity constraints \eqref{eq:KKT_complementarity}, inactive constraints do not play a role in the optimization process. For sensitivity analysis, we only consider the constraints that are binding at the solution. We define the \emph{active set} at the solution $\bar{y}$ as \mbox{$A(x,\bar{y}) = \set{i; g_i(x,\bar{y}) = 0}$}.

If the conditions in Assumption \ref{ass:main} are satisfied, we can differentiate the stationarity condition \eqref{eq:KKT_stationarity} and the complementarity constraints \eqref{eq:KKT_complementarity} for the active constraints. This yields the following linear system for the sensitivities:
\begin{equation} \label{eq:sensitivity_system}
	\underbrace{\begin{bmatrix}
			H_{\mathcal{L}_f} & \nabla_{y} g_A^{\top} \\
			\Lambda_A \nabla_{y} g_A & 0
	\end{bmatrix}}_{=:\,M}
	\begin{bmatrix}
		\dfrac{d\bar{y}}{dx} \\[6pt]
		\dfrac{d\bar{\lambda}_A}{dx}
	\end{bmatrix}
	=
	\begin{bmatrix}
		- \nabla_{yx}^2 \mathcal{L}_f \\
		- \Lambda_A \nabla_{x} g_A
	\end{bmatrix},
\end{equation}
where $H_{\mathcal{L}_f}$ is the Hessian of the Lagrangian \eqref{eq:ll_lagrangian} with respect to $y$ evaluated at $(x, \bar{y}, \bar{\lambda})$; $g_A$ is the vector of active constraints, $g_i$ for $i \in A(x,\bar{y})$; and $\Lambda_A = \text{diag}(\bar{\lambda}_i)_{i \in A}$ is the diagonal matrix of active multipliers. Under Assumption~\ref{ass:main}, the matrix $M$ is nonsingular, so the Jacobian $J := d\bar{y}/dx$ is uniquely defined by \eqref{eq:sensitivity_system}.

\subsection{Total Gradient Computation for the Upper Level}
To solve the implicit upper-level problem \eqref{eq:Implicit_BLP} via a gradient-based method, we need the total derivatives of its objective and constraint functions with respect to $x$. Using the sensitivity term $d\bar{y}/dx$ from the linear system \eqref{eq:sensitivity_system}, these gradients are computed via the chain rule:
\begin{subequations}\label{eq:UL_gradients}
	\begin{align}
		\nabla_{x} F(x, \bar{y}) &= \frac{\partial F}{\partial x} + \frac{\partial F}{\partial y} \frac{d \bar{y}}{dx} \label{eq:total_grad_F} \\
		\nabla_{x} G(x, \bar{y}) &= \frac{\partial G}{\partial x} + \frac{\partial G}{\partial y} \frac{d \bar{y}}{dx} \label{eq:total_grad_G}
	\end{align}
\end{subequations}
These gradients are essential for the iterative optimization procedure, which is designed to find a point satisfying the problem's KKT conditions. The Lagrangian for the upper-level problem is:
\begin{equation}\label{eq:up_lagrangian}
	\mathcal{L}_{F} (x, \mu) = F(x, \bar{y}) + \mu^{\top} G(x, \bar{y}),
\end{equation}
where $\mu \in \mathbb{R}^r$ are the Lagrange multipliers associated with the upper-level constraints. The goal of the upper-level solver is to find a point $(\bar{x}, \bar{\mu})$ that satisfies the following KKT conditions:
\begin{subequations}\label{eq:ul_KKT}
	\begin{align}
		\nabla_{x} \mathcal{L}_F (\bar{x},\bar{\mu}) &=0,\label{eq:UL_KKT_stationarity} \\
		G(\bar{x}, \bar{y}) &\leq 0, \\
		\bar{\mu} &\geq 0, \\
		\bar{\mu}_i G_i(\bar{x}, \bar{y}) &= 0, \quad \forall i=1, \dots, r.
	\end{align}
\end{subequations}

Computing the full Jacobian $J = d\bar{y}/dx$ explicitly by solving \eqref{eq:sensitivity_system} for each of the $n$ columns requires $n$ linear solves with $M$. For gradient-based upper-level methods, however, what is required is not $J$ itself but the product $J^\top q$, where $q$ encodes the upper-level multiplier information. This product can be obtained at the cost of a \emph{single} linear solve by introducing an adjoint formulation.

Define the adjoint vector
\begin{equation}\label{eq:adjoint_rhs}
	q(x,\mu) := \nabla_y F(x, \bar{y}) + \sum_{i=1}^{r} \mu_i \,\nabla_y G_i(x, \bar{y}),
\end{equation}
which collects the $y$-partial derivatives of the upper-level Lagrangian \eqref{eq:up_lagrangian}. The adjoint variables $(\nu, w)$ are then defined as the solution of the linear system
\begin{equation}\label{eq:adjoint_system}
	M^{\top} \begin{bmatrix} \nu \\ w \end{bmatrix} = -\begin{bmatrix} q(x,\mu) \\ 0 \end{bmatrix}.
\end{equation}
Since $M$ is nonsingular under Assumption~\ref{ass:main}, the adjoint system \eqref{eq:adjoint_system} has a unique solution $(\nu, w)$. Transposing \eqref{eq:sensitivity_system} and multiplying on the left by $[\nu^\top, w^\top]$, then substituting \eqref{eq:adjoint_system}, yields the identity
\[
J^\top q(x,\mu) = \left(\nabla_{yx}^2 \mathcal{L}_f\right)^\top \nu + \left(\nabla_x g_A\right)^\top \Lambda_A w.
\]
Substituting into the stationarity condition \eqref{eq:UL_KKT_stationarity}, the gradient of the upper-level Lagrangian can be written entirely in terms of the adjoint variables:
\begin{equation}\label{eq:adjoint_gradient}
	\nabla_x \mathcal{L}_F(x,\mu)
	= \nabla_x F + \sum_{i=1}^{r} \mu_i \nabla_x G_i
	+ \left(\nabla_{yx}^2 \mathcal{L}_f\right)^\top \nu + \left(\nabla_x g_A\right)^\top \Lambda_A w.
\end{equation}
Equation \eqref{eq:adjoint_gradient} requires only a single solve of \eqref{eq:adjoint_system} per gradient evaluation and avoids forming $J$ explicitly. This is the formulation used in the implementation.

The computational cost per inner iteration comprises of one lower-level NLP solve of dimension $m$ and one solve of the adjoint linear system \eqref{eq:adjoint_system} of dimension $(m + \vert A \vert)$, where $\vert A \vert \leq s$ is the number of active lower-level constraints. Forming the sensitivity Jacobian $J$ would require $n$ solves with the same matrix $M$. The adjoint formulation reduces this to a single solve regardless of the upper-level dimension $n$, so the gradient evaluation cost scales with the lower-level problem size only.

\subsection{The Proposed Algorithm}

Throughout this section we distinguish two iteration indices. The outer
index $k$ tracks the Augmented Lagrangian updates of the upper-level
problem, producing the sequence of outer iterates $x_k$ with associated
multipliers $(\mu_k, \rho_k)$. The inner index $j$ tracks the descent
iterations used to approximately minimise each fixed Augmented Lagrangian
subproblem; $x_{k,j}$ denotes the $j$-th inner iterate for fixed
$(\mu_k, \rho_k)$, and $\bar{y}_{k,j}$ is the corresponding lower-level
solution obtained by solving \eqref{eq:ll} at $x_{k,j}$.

The upper-level update step is performed using an \emph{Augmented
	Lagrangian Method (ALM)} based on the
\emph{Powell--Hestenes--Rockafellar (PHR)} augmented Lagrangian function
\citep{Powell1969, Hestenes1969, Rockafellar1973}:
\begin{equation}\label{eq:aug_Lagrangian}
	\mathcal{L}_{\rho}(x,\mu;\rho)
	= F(x,\bar{y})
	+ \frac{1}{2\rho}\sum_{i=1}^{r}
	\Bigl[\bigl(\max\{0,\,\mu_i + \rho G_i(x,\bar{y})\}\bigr)^2
	- \mu_i^2\Bigr],
\end{equation}
where $\rho > 0$ is the penalty parameter. Defining
$\hat{\mu}_i := \max\{0,\,\mu_i + \rho G_i(x,\bar{y})\}$, its first
derivative with respect to $x$ is
\begin{equation}\label{eq:diff_aug_lagr}
	\nabla_x \mathcal{L}_{\rho}(x,\mu;\rho)
	= \nabla_x F(x,\bar{y})
	+ \sum_{i=1}^{r} \hat{\mu}_i\,\nabla_x G_i(x,\bar{y}),
\end{equation}
which is a continuous function. Using the adjoint formulation
\eqref{eq:adjoint_system}--\eqref{eq:adjoint_gradient} with $\mu$
replaced by $\hat{\mu}$, this gradient is evaluated in practice without
forming the Jacobian $J$, by solving a single adjoint system per gradient
evaluation.

At each outer iteration $k$, the ALM forms the augmented Lagrangian
subproblem
\begin{equation}\label{eq:subproblem}
	\min_{x}\;\mathcal{L}_{\rho}(x,\mu_k;\rho_k).
\end{equation}

This subproblem is solved inexactly using a gradient-based method:
\begin{equation}\label{eq:x_update}
	x_{new} \gets x_{current} + \alpha p,
\end{equation}
where $p$ is the L-BFGS-B descent direction and $\alpha$ is the step
size determined via a line search satisfying the strong Wolfe conditions
\begin{subequations}\label{eq:strong_wolfe}
	\begin{align}
		\mathcal{L}_{\rho}(x+\alpha p,\mu_k;\rho_k)
		&\leq \mathcal{L}_{\rho}(x,\mu_k;\rho_k)
		+ c_1\alpha\,\nabla_x\mathcal{L}_{\rho}(x,\mu_k;\rho_k)^{\top}p,
		\label{eq:wolfe_armijo}\\
		\bigl|\nabla_x\mathcal{L}_{\rho}(x+\alpha p,\mu_k;\rho_k)^{\top}p\bigr|
		&\leq c_2\bigl|\nabla_x\mathcal{L}_{\rho}(x,\mu_k;\rho_k)^{\top}p\bigr|,
		\label{eq:wolfe_curvature}
	\end{align}
\end{subequations}
with $0 < c_1 < c_2 < 1$.

Following the primal update, the dual variables are updated using the
new iterate $x_{k+1}$:
\begin{equation}\label{eq:mu_update}
	\mu_{k+1,i} \gets \max\{0,\;\mu_{k,i} + \rho_k G_i(x_{k+1},\bar{y}_{k+1})\}.
\end{equation}
This step uses the constraint violation at the new point
$(x_{k+1}, \bar{y}_{k+1})$ to update the multipliers effectively.
Finally, the penalty parameter $\rho$ is managed by an adaptive scheme
that balances the minimisation of the objective and the satisfaction of
the constraints: the penalty is increased by a factor $\gamma > 1$ only
when the improvement in primal feasibility between iterations is deemed
insufficient, avoiding unnecessarily large penalty values that could lead
to ill-conditioning.

It is worth noting that the sensitivity and adjoint systems
\eqref{eq:sensitivity_system} and \eqref{eq:adjoint_system} are
reconstructed at each inner iterate $x_{k,j}$ using the active set
$A(x_{k,j},\bar{y}_{k,j})$ determined by the lower-level
solve at that point (Algorithm~\ref{alg:ALM_revised}, Step~9).
Consequently, the gradient $\nabla_{x}\mathcal{L}_{\rho}$ used at
iteration $j+1$ is always consistent with the active set at $x_{k,j+1}$,
and no stale sensitivity information is carried forward. If a change in
the active set occurs between iterates, the updated active set is
automatically incorporated at the next gradient evaluation. The
theoretical guarantees presented in the next subsection apply locally on
neighborhoods of active-set stability; across transitions, the algorithm
continues to produce descent steps based on the current active set,
though convergence results do not extend globally across such changes.

We use the residuals of the upper-level KKT conditions
\eqref{eq:ul_KKT} to define a stopping criterion for our algorithm. At
the end of each iteration, using the new iterate $(x_{k+1}, \mu_{k+1})$,
we define the residuals as:
\begin{subequations}\label{eq:residuals}
	\begin{align}
		r_{stat} &= \Vert \nabla_x \mathcal{L}_{F} (x_{k+1}, \mu_{k+1}) \Vert_{\infty}, \label{eq:res_stat}\\
		r_{feas} &= \Vert \max\{0, G(x_{k+1}, \bar{y}_{k+1})\} \Vert_{\infty}, \label{eq:res_feas}\\
		r_{comp} &= \Vert \text{diag}(\mu_{k+1}) G(x_{k+1}, \bar{y}_{k+1}) \Vert_{\infty}, \label{eq:res_comp}
	\end{align}
\end{subequations}
and the overall KKT residual is
\begin{equation}\label{eq:res_KKT}
	r_{KKT} = \max\{r_{stat}, r_{feas}, r_{comp}\}.
\end{equation}
The algorithm terminates when the KKT residual $r_{KKT}$ falls below the
prescribed tolerance $\epsilon > 0$. In practice, a secondary stall
criterion is also imposed: the algorithm terminates if both
$\Vert x_{k+1} - x_k \Vert_\infty < \epsilon_{stall}$ and
$\vert F(x_{k+1}, \bar{y}_{k+1}) - F(x_k,\bar{y}_k) \vert < \epsilon_{stall}$
for a prescribed tolerance $\epsilon_{stall} > 0$, detecting cases where
the iterates and the upper-level objective have ceased to make meaningful
progress.

The complete procedure is presented in Algorithm~\ref{alg:MainAlg}; a
high-level schematic of the overall framework is provided in
Figure~\ref{fig:SBMBLP}.

\begin{algorithm}
	\caption{Sensitivity-Based Algorithm for Bilevel Optimization}
	\label{alg:MainAlg}
	\begin{algorithmic}[1]
		\State \textbf{Initialize:} Set $(x_0, \mu_0, \rho_0)$ and
		tolerances $\epsilon > 0$, $\epsilon_{stall} > 0$.
		\State \textbf{Initial Solve:} Solve \eqref{eq:ll} at $x_0$ to
		obtain the KKT pair $(\bar{y}_0, \bar{\lambda}_0)$.
		\State Compute
		$r_{feas,0} = \|\max\{0, G(x_0,\bar{y}_0)\}\|_{\infty}$.
		\State Set $k \gets 0$.
		\Repeat
		\State Find an improved iterate
		$(x_{k+1}, \bar{y}_{k+1}, \bar{\lambda}_{k+1},
		\mu_{k+1}, \rho_{k+1})$
		by solving the augmented Lagrangian subproblem \eqref{eq:subproblem} using
		Algorithm~\ref{alg:ALM_revised}.\label{alg:UL_step}
		\State Calculate the overall KKT residual $r_{KKT}$ using
		\eqref{eq:residuals} at $(x_{k+1}, \mu_{k+1})$.
		\State $k \gets k+1$.
		\Until{$r_{KKT} < \epsilon$
			\textbf{ or }
			$\|x_k - x_{k-1}\|_{\infty} < \epsilon_{stall}$
			\textbf{ and }
			$|F(x_k,\bar{y}_k) - F(x_{k-1},\bar{y}_{k-1})| <
			\epsilon_{stall}$}\\
		\Return Final solution $(\bar{x}, \bar{y})$.
	\end{algorithmic}
\end{algorithm}

\begin{figure}[t]
	\centering
	\includegraphics[width=0.95\linewidth]{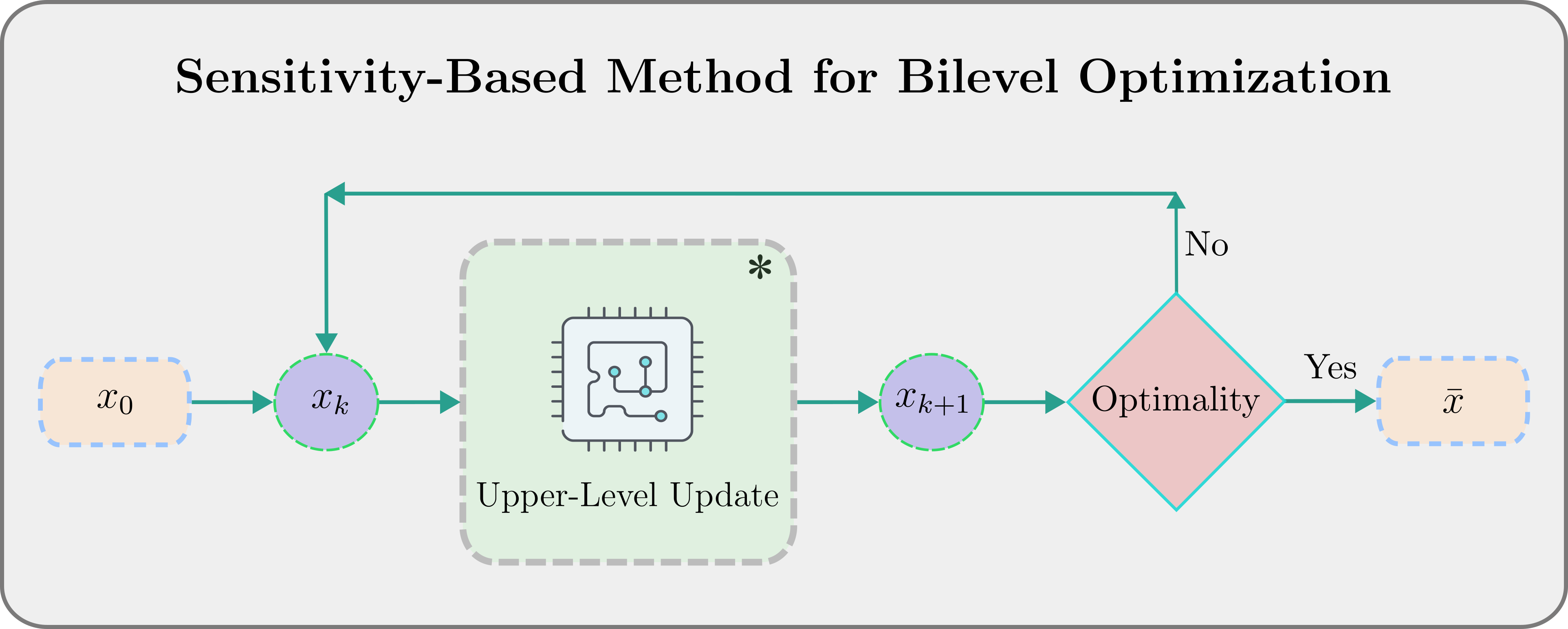}
	\caption{High-level schematic of the overall sensitivity-based
		Augmented Lagrangian framework, corresponding to
		Algorithm~\ref{alg:MainAlg}.}
	\label{fig:SBMBLP}
\end{figure}

The upper-level update step (Step~\ref{alg:UL_step} in
Algorithm~\ref{alg:MainAlg}) is implemented as
Algorithm~\ref{alg:ALM_revised}. The most computationally intensive
part of each inner iteration is the evaluation of the gradient
$\nabla_x \mathcal{L}_\rho$, which is an implicit function of $x$; the
multi-step workflow, involving solving the lower-level NLP and the
adjoint sensitivity system, is illustrated in
Figures~\ref{fig:UL_update}--\ref{fig:grad_computation}.

\begin{figure}[t]
	\centering
	\includegraphics[width=0.95\linewidth]{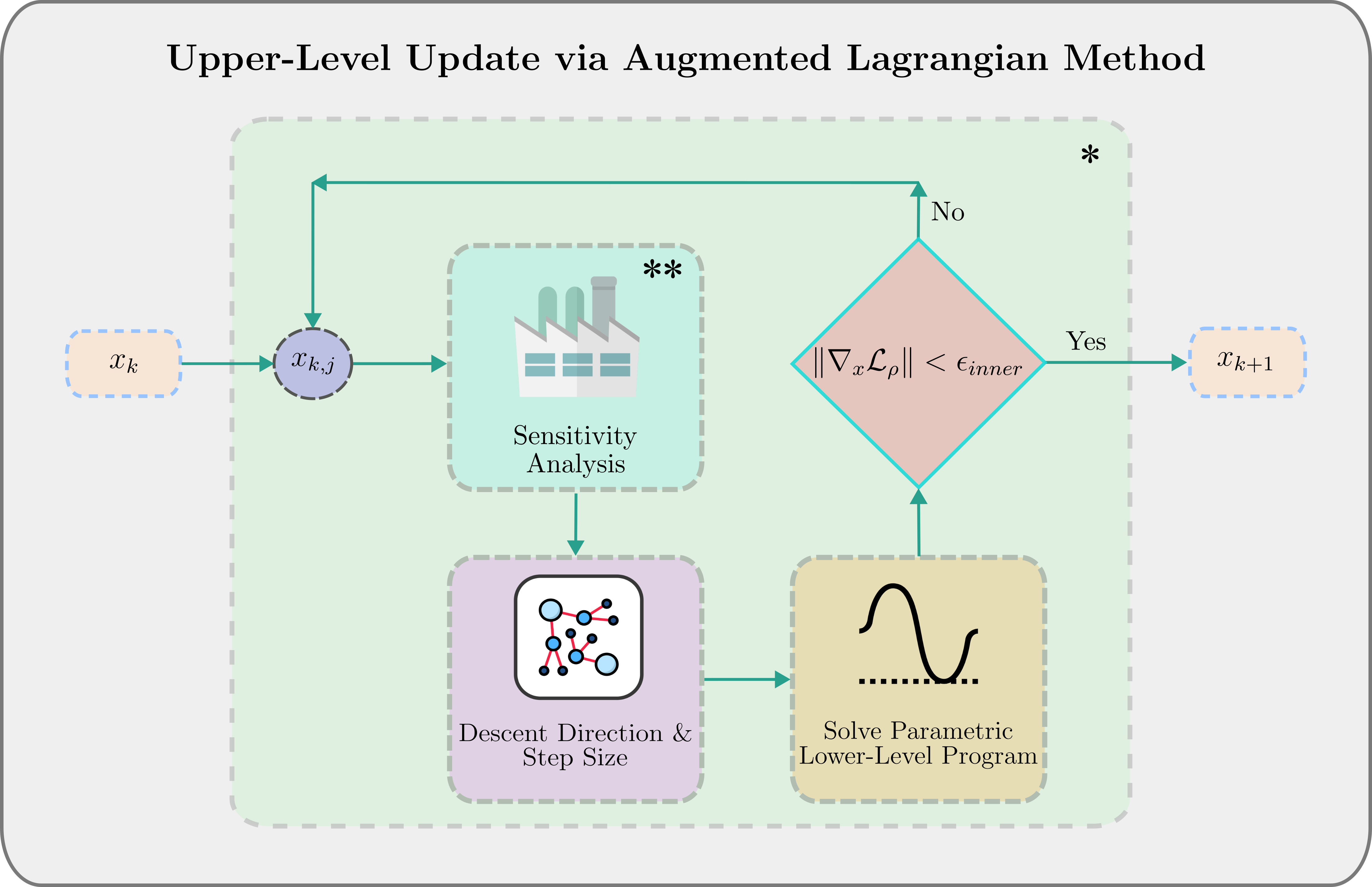}
	\caption{Workflow for the ALM subproblem solution
		(Algorithm~\ref{alg:ALM_revised}). At each outer iteration,
		a NLP solver is used to find an approximate minimiser of the
		implicit Augmented Lagrangian function.}
	\label{fig:UL_update}
\end{figure}

\begin{figure}[t]
	\centering
	\includegraphics[width=0.95\linewidth]{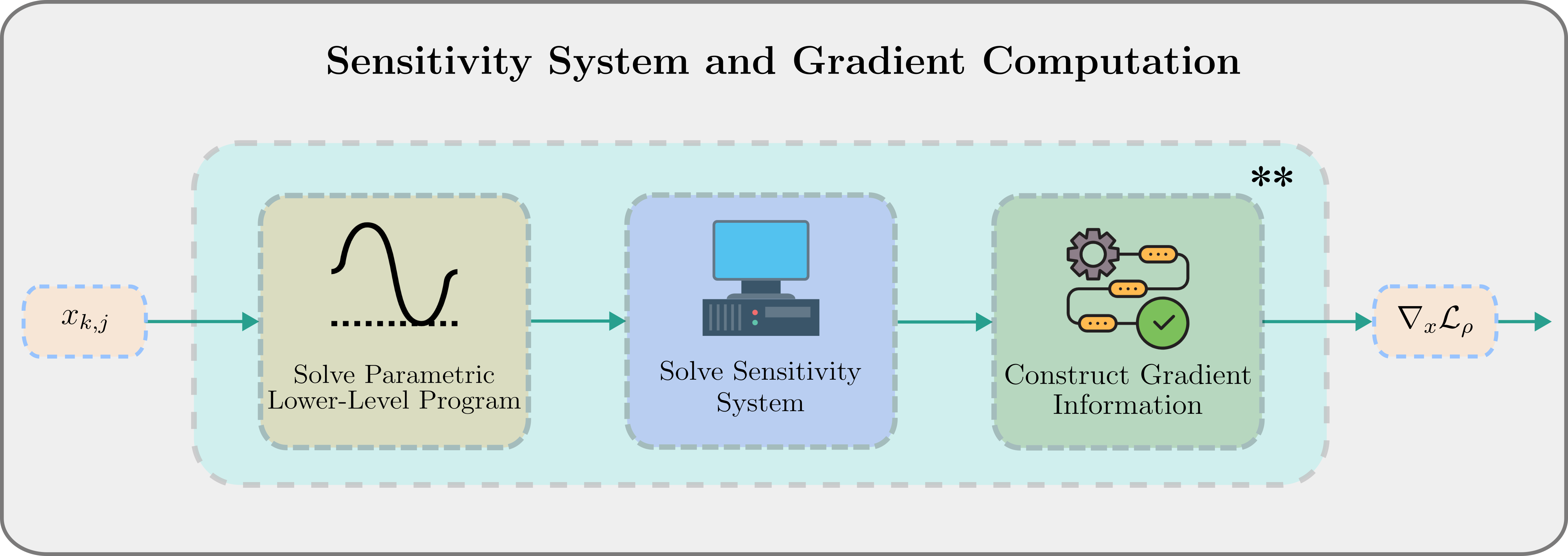}
	\caption{Detailed workflow for the implicit objective and gradient
		evaluation. This multi-step process, which includes solving
		the lower-level NLP and the adjoint linear system, is called
		at each iteration of the inner loop.}
	\label{fig:grad_computation}
\end{figure}

\begin{algorithm}[H]
	\caption{Upper-Level Update via Augmented Lagrangian Method}
	\label{alg:ALM_revised}
	\begin{algorithmic}[1]
		\State \textbf{Input:} Current iterates
		$(x_k, \bar{y}_k, \bar{\lambda}_k, \mu_k, \rho_k,
		r_{feas,k})$ and parameters $\gamma > 1$,
		$c \in (0,1)$, $\epsilon_{inner} > 0$.
		\State \textbf{Initialize inner loop:} Set $x_{k,0} \gets x_k$,
		$\bar{y}_{k,0} \gets \bar{y}_k$,
		$\bar{\lambda}_{k,0} \gets \bar{\lambda}_k$.
		\State Set $j \gets 0$.
		\Repeat
		\State Solve the adjoint system \eqref{eq:adjoint_system} with
		$q(x_{k,j},\hat{\mu}_k)$ defined in \eqref{eq:adjoint_rhs}
		and compute
		$\nabla_x\mathcal{L}_{\rho}(x_{k,j},\mu_k;\rho_k)$ using
		\eqref{eq:adjoint_gradient}, without explicitly forming
		$J = d\bar{y}/dx$.
		\State Compute descent direction $p_{k,j}$ using L-BFGS-B with
		gradient $\nabla_x\mathcal{L}_{\rho}(x_{k,j},\mu_k;\rho_k)$.
		\State Find step size $\alpha_{k,j}$ via a line search satisfying
		the strong Wolfe conditions \eqref{eq:strong_wolfe}.
		\State Update primal variables:
		$x_{k,j+1} \gets x_{k,j} + \alpha_{k,j}\,p_{k,j}$.
		\State Solve the lower-level problem \eqref{eq:ll} at $x_{k,j+1}$
		to get the KKT pair $(\bar{y}_{k,j+1},\bar{\lambda}_{k,j+1})$.
		\State $j \gets j+1$.
		\Until{$\|\nabla_x\mathcal{L}_{\rho}(x_{k,j},\mu_k;\rho_k)\|_{\infty}
			< \epsilon_{inner}$}
		\State \textbf{Set final iterates:} $x_{k+1} \gets x_{k,j}$,
		$\bar{y}_{k+1} \gets \bar{y}_{k,j}$, and
		$\bar{\lambda}_{k+1} \gets \bar{\lambda}_{k,j}$.
		\State Update dual variables:
		$\mu_{k+1,i} \gets
		\max\{0,\,\mu_{k,i} + \rho_k G_i(x_{k+1},\bar{y}_{k+1})\}$.
		\State Calculate constraint violation
		$r_{feas,k+1} \gets
		\|\max\{0, G(x_{k+1},\bar{y}_{k+1})\}\|_{\infty}$.
		\If{$r_{feas,k+1} > c\cdot r_{feas,k}$}
		\State $\rho_{k+1} \gets \gamma\cdot\rho_k$
		\Else
		\State $\rho_{k+1} \gets \rho_k$
		\EndIf \\
		\Return $(x_{k+1}, \bar{y}_{k+1}, \bar{\lambda}_{k+1},
		\mu_{k+1}, \rho_{k+1}, r_{feas,k+1})$.
	\end{algorithmic}
\end{algorithm}

\subsection{Convergence Analysis}

All results in this section are of a local nature. Convergence holds in
neighborhoods of $(\bar{x},\bar{y})$ where the lower-level KKT system
is strongly regular and the active set $A(x,\bar{y})$ remains constant.
On such neighborhoods the solution mapping $\bar{y}(x)$ and the reduced
functions $F(x,\bar{y}(x))$ and $G(x,\bar{y}(x))$ are continuously
differentiable, and the adjoint-based gradient \eqref{eq:adjoint_gradient}
is well defined. Globally, the reduced objective is only piecewise smooth
and may fail to be differentiable across active-set changes; accordingly,
the convergence and stationarity statements below apply locally on regions
of active-set stability.

For completeness, we provide a sketch of the convergence proof of
Algorithm~\ref{alg:MainAlg} using the upper-level update methodology
presented in Algorithm~\ref{alg:ALM_revised}. The proof adapts the
standard convergence analysis for the Augmented Lagrangian method
\citep{Nocedal2006} to our sensitivity-based framework for bilevel
optimization.

\begin{theorem}[Convergence to a KKT point]\label{thm:convergence}
	Let $\{x_k, \mu_k\}$ be a sequence of iterates generated by
	Algorithm~\ref{alg:MainAlg}, with the upper-level update step given
	by Algorithm~\ref{alg:ALM_revised}. Assume that:
	\begin{enumerate}
		\item[(a)] The regularity conditions in Assumption~\ref{ass:main}
		hold for every point in the sequence, and the inner solves
		in Algorithm~\ref{alg:ALM_revised} produce steps satisfying
		the strong Wolfe conditions \eqref{eq:strong_wolfe}.
		\item[(b)] The sequence of iterates $\{x_k\}$ is contained within
		a compact set $X$, and the corresponding sequence of generated
		multipliers $\{\mu_k\}$ is bounded.
	\end{enumerate}
	Then any limit point $(\bar{x}, \bar{\mu})$ of the sequence
	$\{x_k, \mu_k\}$ is a KKT point of \eqref{eq:Implicit_BLP}, with
	stationarity in the sense of \eqref{eq:adjoint_gradient}.
\end{theorem}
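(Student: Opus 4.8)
The plan is to recognize that, under Assumption~\ref{ass:main}, the implicit problem \eqref{eq:Implicit_BLP} is an ordinary smooth nonlinear program, and then to adapt the textbook convergence theory for the PHR augmented Lagrangian method \citep{Nocedal2006} to this reduced program. First I would invoke hypothesis (a) together with the Implicit Function Theorem to establish that $\bar{y}(\cdot)$ is $C^1$ on a neighborhood of every iterate, so that the reduced functions $\tilde{F}(x):=F(x,\bar{y}(x))$ and $\tilde{G}(x):=G(x,\bar{y}(x))$ are continuously differentiable with total gradients given exactly by \eqref{eq:UL_gradients}, the sensitivities $d\bar{y}/dx$ being the unique solution of the regular linear system \eqref{eq:sensitivity_system}. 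This reduces the theorem to the statement that Algorithm~\ref{alg:ALM_revised}, applied to $\min_x \tilde{F}(x)$ subject to $\tilde{G}(x)\le 0$, produces KKT limit points. I would also note, as a preliminary, that each inner subproblem \eqref{eq:subproblem} is reachable to tolerance $\epsilon_{inner}$: since \eqref{eq:diff_aug_lagr} is continuous and $\{x_{k,j}\}$ stays in the compact set $X$, a Wolfe line search yields $\|\nabla_x \mathcal{L}_{\rho}\|_\infty\to 0$ along the inner loop by a standard Zoutendijk argument, so the inner termination test is well posed.

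The crux is a gradient identity that transfers inner-loop stationarity into stationarity of the genuine Lagrangian \eqref{eq:up_lagrangian}. Comparing each summand $\max\{0,\mu_{k,i}+\rho_k G_i(x_{k+1},\bar{y}_{k+1})\}$ in \eqref{eq:diff_aug_lagr} with the dual update \eqref{eq:mu_update} gives
\[
\nabla_{x} \mathcal{L}_{\rho}(x_{k+1},\mu_k;\rho_k) = \nabla_{x} \mathcal{L}_{F}(x_{k+1},\mu_{k+1}).
\]
Because the inner loop terminates with $\|\nabla_x \mathcal{L}_{\rho}(x_{k+1},\mu_k;\rho_k)\|_\infty < \epsilon_{inner}$, the stationarity residual $r_{stat}$ of \eqref{eq:res_stat} evaluated at $(x_{k+1},\mu_{k+1})$ is bounded by $\epsilon_{inner}$. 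Interpreting the inner tolerance as a sequence $\epsilon_{inner,k}\downarrow 0$ across outer iterations, and passing to a convergent subsequence $(x_k,\mu_k)\to(\bar{x},\bar{\mu})$ — which exists by hypothesis (b), since $\{x_k\}\subset X$ is compact and $\{\mu_k\}$ is bounded — I would conclude $\nabla_{x}\mathcal{L}_{F}(\bar{x},\bar{\mu})=0$ by continuity of the gradients, i.e. \eqref{eq:UL_KKT_stationarity}. Dual feasibility $\bar{\mu}\ge 0$ is immediate from the projection $\max\{0,\cdot\}$ in \eqref{eq:mu_update}.

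The remaining two conditions, primal feasibility and complementary slackness, I would handle with the usual dichotomy on the penalty sequence. If $\{\rho_k\}$ stays bounded, the update rule of Algorithm~\ref{alg:ALM_revised} forces $r_{feas,k+1}\le c\,r_{feas,k}$ for all large $k$ (otherwise $\rho$ would be increased infinitely often and escape to infinity), so $r_{feas,k}\to 0$ geometrically and hence $\tilde{G}(\bar{x})\le 0$; complementarity then follows from the limiting fixed-point relation $\bar{\mu}_i=\max\{0,\bar{\mu}_i+\rho^\star \tilde{G}_i(\bar{x})\}$, which forces $\bar{\mu}_i=0$ whenever $\tilde{G}_i(\bar{x})<0$ since $\rho^\star>0$. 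If instead $\rho_k\to\infty$, I would exploit the boundedness of $\{\mu_k\}$ directly: on the active branch of \eqref{eq:mu_update} one has $G_i(x_{k+1},\bar{y}_{k+1})=(\mu_{k+1,i}-\mu_{k,i})/\rho_k$ while $\mu_{k+1,i}=0$ otherwise, so both the positive part of the constraint and the complementarity product $\mu_{k+1,i}\,G_i(x_{k+1},\bar{y}_{k+1})$ vanish as $\rho_k\to\infty$, yielding $\tilde{G}(\bar{x})\le 0$ and $r_{comp}\to 0$ simultaneously.

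Collecting the four limiting conditions shows that $(\bar{x},\bar{\mu})$ satisfies \eqref{eq:ul_KKT}, proving the claim. I expect the unbounded-penalty case to be the main obstacle, for two reasons: it is the only place where hypothesis (b) is genuinely load-bearing, and there one must extract feasibility and complementarity together from the boundedness of the multiplier estimates rather than from a monotone decrease of $r_{feas}$. A secondary subtlety worth flagging is that asymptotic stationarity requires the inner tolerance to be driven to zero; the convergence statement should therefore be read with $\epsilon_{inner}$ regarded as a vanishing sequence, even though Algorithm~\ref{alg:ALM_revised} is written with a fixed inner tolerance.
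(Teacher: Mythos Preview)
Your proof follows essentially the same architecture as the paper's --- existence of a limit point via Bolzano--Weierstrass, then verification of the four KKT conditions of \eqref{eq:ul_KKT}, with the penalty dichotomy driving primal feasibility --- and the gradient identity $\nabla_{x}\mathcal{L}_{\rho}(x_{k+1},\mu_k;\rho_k)=\nabla_{x}\mathcal{L}_{F}(x_{k+1},\mu_{k+1})$ you highlight is exactly the substitution the paper performs in its stationarity step. The one place you genuinely diverge is the unbounded-penalty branch of the feasibility argument. The paper argues by contradiction: an infeasible limit point would make the penalty contribution $\tfrac{\rho_k}{2}G_j(x_k,\bar y_k)^2$ diverge, while the Wolfe line search is claimed to keep $\mathcal{L}_\rho(x_{k+1},\mu_k;\rho_k)$ bounded above. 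You instead extract feasibility and complementarity simultaneously and directly from the multiplier update, writing $G_i(x_{k+1},\bar y_{k+1})=(\mu_{k+1,i}-\mu_{k,i})/\rho_k$ on the active branch and $\mu_{k+1,i}=0$ on the inactive one, so that both $\max\{0,G_i\}$ and $\mu_{k+1,i}G_i$ are $O(1/\rho_k)$ by the boundedness of $\{\mu_k\}$. Your route is the more standard one and is arguably tighter: the paper's Wolfe-descent inequality $\mathcal{L}_\rho(x_{k+1},\mu_k;\rho_k)\le\mathcal{L}_\rho(x_k,\mu_k;\rho_k)$ holds only within a single outer iteration, so the asserted uniform upper bound across $k$ (with $\mu_k$ and $\rho_k$ both changing) would need additional justification. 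Your explicit remark that $\epsilon_{inner}$ must be read as a vanishing sequence for asymptotic stationarity to go through is likewise a point the paper leaves implicit.
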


\begin{proof}
	First, we establish the existence of a limit point for the sequence
	of iterates. By Assumption~(b), the sequence $\{x_k\}$ is contained
	within a compact set $X$, and the sequence of multipliers $\{\mu_k\}$
	is bounded. This implies that the joint sequence $\{x_k, \mu_k\}$ is
	also contained within a compact set. Therefore, by the
	Bolzano--Weierstrass theorem, there exists at least one convergent
	subsequence. Let $(\bar{x}, \bar{\mu})$ be the limit point of such a
	subsequence, indexed by $\mathcal{K} \subseteq \mathbb{N}$, such that:
	\begin{equation}\label{eq:limit_point}
		\lim_{k \to \infty,\, k \in \mathcal{K}} (x_k, \mu_k)
		= (\bar{x}, \bar{\mu}).
	\end{equation}
	
	We now show that $(\bar{x}, \bar{\mu})$ satisfies the KKT conditions
	of the upper-level problem \eqref{eq:ul_KKT}.
	
	\textit{1. Dual feasibility:} The multiplier update rule in
	Algorithm~\ref{alg:ALM_revised}, given by \eqref{eq:mu_update},
	ensures that every component of $\mu_k$ is non-negative for all
	$k > 1$. Since the terms of the convergent subsequence are
	non-negative, their limit must also be non-negative, i.e.,
	$\bar{\mu} \geq 0$.
	
	\textit{2. Primal feasibility:} Consider the two possible behaviors
	of the penalty parameter sequence $\{\rho_k\}$.
	
	(a) The sequence remains bounded. This implies that there exists
	sufficiently large $\bar{k} \in \mathcal{K}$, after which the penalty
	parameter is no longer increased, i.e., $\rho_k = \rho_{\bar{k}}$
	for $k > \bar{k}$. By Algorithm~\ref{alg:ALM_revised}, this happens
	if the condition
	\begin{equation}
		\|\max\{0, G(x_{k+1}, \bar{y}_{k+1})\}\|_{\infty}
		\leq c \cdot \|\max\{0, G(x_k, \bar{y}_k)\}\|_{\infty},
	\end{equation}
	with $c \in (0,1)$, is satisfied for all $k > \bar{k}$. This implies
	that the sequence of feasibility residuals $\{r_{feas,k}\}$ converges
	to zero. Therefore, $r_{feas,k} \to 0$, implying
	$G(\bar{x}, \bar{y}) \leq 0$ by continuity.
	
	(b) The sequence diverges to infinity, i.e., $\rho_k \to \infty$.
	Suppose for contradiction that the limit point $\bar{x}$ is
	infeasible, meaning there is at least one constraint $j$ such that
	$G_j(\bar{x},\bar{y}) > 0$. By continuity of $G_j$ and $\bar{y}(x)$
	(Assumption~(a)), $G_j(x_k, \bar{y}_k) > 0$ for sufficiently large
	$k$. Since $\{x_k\}$ and $\{\mu_k\}$ are bounded, all other terms in
	$\mathcal{L}_\rho$ remain bounded. However, the penalty term for
	constraint $j$ grows asymptotically like
	$\frac{\rho_k}{2} G_j(x_k, \bar{y}_k) \to \infty$
	as $\rho_k \to \infty$.
	
	The strong Wolfe conditions \eqref{eq:strong_wolfe} ensure that the
	sequence of augmented Lagrangian values is non-increasing,
	\begin{equation}
		\mathcal{L}_{\rho}(x_{k+1}, \mu_k; \rho_k)
		\leq \mathcal{L}_{\rho}(x_k, \mu_k; \rho_k),
	\end{equation}
	and hence bounded above. This contradicts the divergence just
	established. Therefore, the initial assumption of infeasibility is
	false, and the limit point must be feasible.
	
	\textit{3. Stationarity:} By construction of
	Algorithm~\ref{alg:ALM_revised}, the primal update step satisfies
	the strong Wolfe conditions \eqref{eq:strong_wolfe}. Under Assumption 3.1, the reduced augmented Lagrangian is continuously differentiable on the neighborhood of active-set stability, and standard results for strong Wolfe line searches ensure the gradient norm converges to zero \citep{Nocedal2006}. Therefore:
	\begin{equation}
		\lim_{k \to \infty}
		\|\nabla_x \mathcal{L}_{\rho}(x_k, \mu_k; \rho_k)\| = 0.
	\end{equation}
	
	We substitute the dual variable update rule \eqref{eq:mu_update}
	into the adjoint gradient \eqref{eq:adjoint_gradient} and use
	continuity of the adjoint mapping (Assumption~(a)) to pass to the
	limit along $\mathcal{K}$. As $k \to \infty$ for $k \in \mathcal{K}$,
	we have $x_k \to \bar{x}$, $\mu_k \to \bar{\mu}$, and
	$\mu_{k+1} \to \bar{\mu}$. Taking the limit gives:
	\begin{subequations}
		\begin{align}
			0 &= \lim_{k \to \infty,\, k \in \mathcal{K}}
			\left\|\nabla_x F(x_k, \bar{y}_k)
			+ \mu_{k+1}^{\top} \nabla_x G(x_k, \bar{y}_k)\right\| \\
			&= \left\|\nabla_x F(\bar{x}, \bar{y})
			+ \bar{\mu}^{\top} \nabla_x G(\bar{x}, \bar{y})\right\| \\
			&= \|\nabla_x \mathcal{L}_F(\bar{x}, \bar{\mu})\|.
		\end{align}
	\end{subequations}
	This directly implies that the stationarity condition
	$\nabla_x \mathcal{L}_F(\bar{x}, \bar{\mu}) = 0$ is satisfied.
	
	\textit{4. Complementarity:} Having established primal feasibility,
	we consider the two cases for any constraint $j$ at the limit point.
	
	(a) The constraint is inactive, i.e., $G_j(\bar{x}, \bar{y}) < 0$.
	By continuity, there exists a $\bar{k}$ such that for all
	$k \in \mathcal{K}$ with $k > \bar{k}$ we have
	$G_j(x_k, y_k) < 0$. By Assumption~(b), the sequence $\{\mu_k\}$
	is bounded. Regardless of whether $\{\rho_k\}$ is bounded or
	diverges, the negative term $\rho_k G_j(x_k, y_k)$ is guaranteed to
	eventually dominate the bounded, non-negative $\mu_{k,j}$. This
	ensures that the term $\mu_{k,j} + \rho_k G_j(x_k, y_k)$ will be
	negative for all $k \in \mathcal{K}$ with $k > \bar{k}$, forcing
	$\mu_{k+1,j}$ to be zero via the $\max$ operator in
	\eqref{eq:mu_update}. Therefore, the limit $\bar{\mu}_j$ must be
	zero.
	
	(b) The constraint is active, i.e., $G_j(\bar{x}, \bar{y}) = 0$.
	In this case, the condition $\bar{\mu}_j G_j(\bar{x}, \bar{y}) = 0$
	is trivially satisfied.
	
	Since all KKT conditions are satisfied, any limit point
	$(\bar{x}, \bar{\mu})$ is a KKT point of the implicit problem
	\eqref{eq:Implicit_BLP}.
\end{proof}

\subsection{Equivalence to S-Stationarity}

We now establish the equivalence between a KKT point of the implicit
problem \eqref{eq:Implicit_BLP} and a stationary point of the MPCC
reformulation \eqref{eq:BLP MPCC}. A constraint qualification tailored
for these problems is \emph{MPEC-LICQ}, which requires that the
gradients of the active upper-level constraints, the active lower-level
constraints, and the lower-level stationarity equations are linearly
independent. This condition ensures that the multipliers of the MPCC
are well-defined \citep{Luo1996}.

The Lagrangian for the MPCC \eqref{eq:BLP MPCC} is given by:
\begin{equation}\label{eq:MPCC_Lagrangian}
	\mathcal{L}_{MPCC}(x,y,\lambda,\mu,\nu,\pi,\xi)
	= F(x,y) + \mu^{\top}G(x,y)
	+ \nu^{\top}\nabla_y \mathcal{L}_f(x,y,\lambda)
	+ \pi^{\top}g(x,y) - \xi^{\top}\lambda,
\end{equation}
where $\mu, \nu, \pi, \xi$ are the Lagrange multipliers following
standard sign convention: the equality constraint multiplier $\nu$ is
free, and the inequality constraint multipliers $\mu, \pi, \xi$ are
non-negative. The complementarity condition
$\bar{\lambda}_i g_i(\bar{x},\bar{y}) = 0$ for all $i$ is not
incorporated into \eqref{eq:MPCC_Lagrangian} directly; it is handled
through index-set dependent sign rules on the multipliers.

A feasible point $(\bar{x},\bar{y},\bar{\lambda})$ of the MPCC
\eqref{eq:BLP MPCC} is \emph{S--stationary} if there exist multipliers
$(\bar{\mu}, \bar{\nu}, \bar{\pi}, \bar{\xi})$ such that the following
hold:
\begin{enumerate}
	\item Stationarity:
	\begin{subequations}\label{eq:MPCC_stationarity}
		\begin{align}
			\nabla_x \mathcal{L}_{MPCC}
			&= \nabla_x F + \nabla_x G^{\top}\bar{\mu}
			+ (\nabla_{yx}^2 \mathcal{L}_f)^{\top}\bar{\nu}
			+ \nabla_x g^{\top}\bar{\pi} = 0,
			\label{eq:MPCC_stat_x}\\
			\nabla_y \mathcal{L}_{MPCC}
			&= \nabla_y F + \nabla_y G^{\top}\bar{\mu}
			+ (\nabla_y^2 \mathcal{L}_f)^{\top}\bar{\nu}
			+ \nabla_y g^{\top}\bar{\pi} = 0,
			\label{eq:MPCC_stat_y}\\
			\nabla_{\lambda} \mathcal{L}_{MPCC}
			&= (\nabla_y g)\,\bar{\nu} - \bar{\xi} = 0.
			\label{eq:MPCC_stat_lambda}
		\end{align}
	\end{subequations}
	
	\item Primal feasibility:
	$$G(\bar{x},\bar{y}) \leq 0,\quad
	\nabla_y \mathcal{L}_f(\bar{x},\bar{y},\bar{\lambda}) = 0,\quad
	g(\bar{x},\bar{y}) \leq 0,\quad
	\bar{\lambda} \geq 0,\quad
	\bar{\lambda}_i g_i(\bar{x},\bar{y}) = 0\ \forall i.$$
	
	\item Dual feasibility and complementarity slackness:
	$$\bar{\mu} \geq 0,\quad \bar{\pi} \geq 0,\quad \bar{\xi} \geq 0,$$
	$$\bar{\mu}^{\top}G(\bar{x},\bar{y}) = 0,\quad
	\bar{\pi}^{\top}g(\bar{x},\bar{y}) = 0,\quad
	\bar{\xi}^{\top}\bar{\lambda} = 0.$$
	
	\item Sign rules: Define index sets
	\begin{align*}
		\mathcal{I}^{+} &= \{i:\ g_i(\bar{x},\bar{y}) = 0,\
		\bar{\lambda}_i > 0\},\\
		\mathcal{I}^{-} &= \{i:\ g_i(\bar{x},\bar{y}) \leq 0,\
		\bar{\lambda}_i = 0\},\\
		\mathcal{I}^{0} &= \{i:\ g_i(\bar{x},\bar{y}) = 0,\
		\bar{\lambda}_i = 0\}.
	\end{align*}
	Then for every $i$:
	\begin{align*}
		\begin{cases}
			i \in \mathcal{I}^{+}: & \bar{\pi}_i \geq 0,\
			\bar{\xi}_i = 0,\\
			i \in \mathcal{I}^{-}: & \bar{\pi}_i = 0,\
			\bar{\xi}_i \geq 0,\\
			i \in \mathcal{I}^{0}: & \bar{\pi}_i \geq 0,\
			\bar{\xi}_i \geq 0.
		\end{cases}
	\end{align*}
\end{enumerate}

\begin{theorem}[Equivalence to S-stationarity]\label{thm:equivalence}
	Let $(\bar{x}, \bar{y})$ be a feasible point for the bilevel problem
	\eqref{eq:op_BLP}, and let $\bar{\lambda}$ be a multiplier such that
	$(\bar{x}, \bar{y}, \bar{\lambda})$ satisfies the KKT conditions of
	the lower-level problem \eqref{eq:ll}. Assume that the lower-level
	regularity conditions (Assumption~\ref{ass:main}) hold in a
	neighborhood of $(\bar{x}, \bar{y})$. Assume moreover that
	MPEC--LICQ holds for the MPCC \eqref{eq:BLP MPCC} at
	$(\bar{x}, \bar{y}, \bar{\lambda})$.
	
	Then, there exists $\bar{\mu}$ such that $(\bar{x}, \bar{\mu})$ is a
	KKT point of the implicit problem \eqref{eq:Implicit_BLP} if and only
	if $(\bar{x}, \bar{y}, \bar{\lambda})$ is an \mbox{S--stationary}
	point of the MPCC reformulation \eqref{eq:BLP MPCC}.
\end{theorem}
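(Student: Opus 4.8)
The plan is to set up an explicit correspondence between the multipliers of the two problems, using the lower-level sensitivity system \eqref{eq:sensitivity_system} as the bridge; the upper-level multiplier $\bar\mu$ is shared by both formulations, so the task reduces to relating the single implicit stationarity equation \eqref{eq:UL_KKT_stationarity} to the three MPCC stationarity blocks \eqref{eq:MPCC_stationarity}, with all the remaining feasibility and complementarity conditions transferring verbatim. I would lean heavily on Assumption \ref{ass:main}: strict complementarity freezes the active set $A(\bar x,\bar y)$ with $\bar\lambda_A>0$, $\bar\lambda_{A^c}=0$, and empties the biactive set $\mathcal I^0$, while LICQ and SOSC make the symmetric saddle-point matrix of \eqref{eq:sensitivity_system}---call it $K$, obtained after dividing the invertible factor $\Lambda_A$ out of the second block row---nonsingular. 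Two transposed sensitivity identities then do the heavy lifting: $(\nabla_{yx}^2\mathcal L_f)^\top + (d\bar y/dx)^\top H_{\mathcal L_f} = -(d\bar\lambda_A/dx)^\top\nabla_y g_A$ and $\nabla_x g_A^\top + (d\bar y/dx)^\top\nabla_y g_A^\top = 0$. Writing $\Phi := F+\bar\mu^\top G$, the chain rule \eqref{eq:UL_gradients} gives $\nabla_x\mathcal L_F = \partial_x\Phi + (d\bar y/dx)^\top\partial_y\Phi$.

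For the implication S-stationary $\Rightarrow$ implicit KKT, I would use \eqref{eq:MPCC_stat_x}--\eqref{eq:MPCC_stat_y} to write $\partial_x\Phi$ and $\partial_y\Phi$ in terms of $\bar\nu$ and $\bar\pi_A$ (with $\bar\pi_{A^c}=0$ on the strictly inactive constraints), substitute into $\nabla_x\mathcal L_F$, and collapse using the two identities above. The $\bar\pi_A$-terms vanish by the second identity, and the $\bar\nu$-terms condense to $(d\bar\lambda_A/dx)^\top\nabla_y g_A\bar\nu$; then \eqref{eq:MPCC_stat_lambda} together with $\bar\xi_A=0$ (forced by $\bar\xi^\top\bar\lambda=0$ and $\bar\lambda_A>0$) yields $\nabla_y g_A\bar\nu=\bar\xi_A=0$, so $\nabla_x\mathcal L_F(\bar x,\bar\mu)=0$. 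This direction is routine and, notably, never invokes the sign of $\bar\pi_A$ or $\bar\xi_{A^c}$.

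For the converse I would construct the missing multipliers. Fixing $\bar\pi_{A^c}=0$ and $\bar\xi_A=0$, I solve the nonsingular system $K\,[\bar\nu;\bar\pi_A]=[-\partial_y\Phi;\,0]$ for $(\bar\nu,\bar\pi_A)$ and set $\bar\xi_{A^c}:=\nabla_y g_{A^c}\bar\nu$. By construction the $y$- and $\lambda$-blocks \eqref{eq:MPCC_stat_y}--\eqref{eq:MPCC_stat_lambda} hold; primal feasibility is inherited from the lower-level KKT conditions and from $G(\bar x,\bar y)\le 0$; and the products $\bar\mu^\top G$, $\bar\pi^\top g$, $\bar\xi^\top\bar\lambda$ all vanish by the index-set choices. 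Running the computation of the previous paragraph in reverse---now taking $\nabla_x\mathcal L_F(\bar x,\bar\mu)=0$ as the hypothesis---recovers the $x$-block \eqref{eq:MPCC_stat_x}, so $(\bar x,\bar y,\bar\lambda)$ satisfies every MPCC stationarity equation with the same $\bar\mu$.

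The step I expect to be the crux is the sign and index-set accounting in this converse. The equalities $\bar\pi_{A^c}=0$ and $\bar\xi_A=0$ are forced and the $x$-block is recovered mechanically, so the only genuinely delicate point is the sign status of $\bar\pi_A$ and $\bar\xi_{A^c}$, which the linear solve does not control. The reason the equivalence still closes is exactly that strict complementarity empties $\mathcal I^0$: on $\mathcal I^+=A$ and $\mathcal I^-=A^c$ these indices behave like equality constraints in the tightened problem, so the strong-stationarity sign restrictions apply only on $\mathcal I^0$ and are vacuous here. I would state this explicitly, since without SCC the constructed $\bar\pi_A$ and $\bar\xi_{A^c}$ could carry either sign and the various MPEC stationarity notions would cease to coincide; it is precisely this collapse, guaranteed by Assumption \ref{ass:main}(c), that both well-defines the multipliers (via the invertibility of $K$) and makes the statement a genuine two-sided equivalence.
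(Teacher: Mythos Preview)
Your overall strategy coincides with the paper's: both directions hinge on the same adjoint/sensitivity identity, and your construction of $(\bar\nu,\bar\pi_A)$ from the nonsingular system $K\,[\bar\nu;\bar\pi_A]=[-\partial_y\Phi;\,0]$ is exactly the paper's adjoint system $M^\top[\bar\nu;\bar w]=-[\partial_y\Phi;\,0]$ after the change of variable $\bar w=\bar\Lambda_A^{-1}\bar\pi_A$. The stationarity, feasibility, and complementarity bookkeeping you sketch also matches, and the order in which you present the two implications is simply reversed.

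The one substantive divergence is precisely where you flag it: the sign status of $\bar\pi_A$ and $\bar\xi_{A^c}$ in the direction implicit-KKT $\Rightarrow$ S-stationary. You resolve it by invoking the collapse of MPEC stationarity notions when $\mathcal I^0=\emptyset$, arguing that on $\mathcal I^+$ and $\mathcal I^-$ the multipliers are effectively free. That is the correct picture under the standard Scheel--Scholtes tightened-NLP definition of S-stationarity, but it does \emph{not} match the definition the paper actually writes down, which in items~3 and~4 preceding the theorem explicitly requires $\bar\pi\ge 0$ and $\bar\xi\ge 0$ globally (in particular $\bar\pi_i\ge 0$ on $\mathcal I^+$ and $\bar\xi_i\ge 0$ on $\mathcal I^-$). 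Under that stated definition your linear solve does not deliver these signs, so relative to the theorem as written your argument has a gap.

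The paper takes a different route at exactly this point: rather than declaring the signs vacuous, it argues that $\bar\pi_A\ge 0$ and $\bar\xi_I\ge 0$ follow from ``the optimality of the KKT point $(\bar x,\bar\mu)$'' via a shadow-price interpretation guaranteed by MPEC--LICQ. That argument is itself delicate---it tacitly upgrades ``KKT point'' to ``local minimizer,'' which the theorem statement does not assume---but it is the mechanism the paper relies on, and it is the piece your proposal replaces rather than reproduces.
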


\begin{proof}
	The sensitivity matrix $M$ and sensitivity system are as defined in
	\eqref{eq:sensitivity_system}; by Assumption~\ref{ass:main}, $M$ is
	nonsingular. Denote the active and inactive index sets at
	$(\bar{x},\bar{y})$ by $A = \{i:\ g_i(\bar{x},\bar{y}) = 0,\
	\bar{\lambda}_i > 0\}$ and $I = \{i:\ g_i(\bar{x},\bar{y}) < 0,\
	\bar{\lambda}_i = 0\}$, respectively; SCC (Assumption~\ref{ass:main})
	rules out the biactive case.
	
	The stationarity condition of the implicit problem \eqref{eq:Implicit_BLP}
	at $(\bar{x},\bar{\mu})$, written via the chain rule, is:
	\begin{equation}\label{eq:UL-stationarity}
		\nabla_x F(\bar{x},\bar{y})
		+ \nabla_x G(\bar{x},\bar{y})^{\top}\bar{\mu}
		+ \left(\frac{d\bar{y}}{dx}\right)^{\top}
		\Bigl(\nabla_y F(\bar{x},\bar{y})
		+ \nabla_y G(\bar{x},\bar{y})^{\top}\bar{\mu}\Bigr) = 0.
	\end{equation}
	
	$(\Rightarrow)$ Suppose $(\bar{x},\bar{\mu})$ is a KKT point of
	\eqref{eq:Implicit_BLP}. The adjoint variables $(\bar{\nu},\bar{w})$
	are the unique solution of the adjoint system \eqref{eq:adjoint_system}
	evaluated at $(\bar{x},\bar{y},\bar{\lambda},\bar{\mu})$, with right-hand
	side given by $q(\bar{x},\bar{\mu})$ as defined in
	\eqref{eq:adjoint_rhs}. Applying the adjoint identity
	\eqref{eq:adjoint_gradient} to \eqref{eq:UL-stationarity} gives:
	\begin{equation}\label{eq:adjoint-equality}
		\left(\frac{d\bar{y}}{dx}\right)^{\top}
		\Bigl(\nabla_y F(\bar{x},\bar{y})
		+ \nabla_y G(\bar{x},\bar{y})^{\top}\bar{\mu}\Bigr)
		= \bigl(\nabla_{yx}^2 \mathcal{L}_f\bigr)^{\top}\bar{\nu}
		+ (\nabla_x g_A)^{\top}\bar{\Lambda}_A\bar{w},
	\end{equation}
	and substituting into \eqref{eq:UL-stationarity} yields:
	\begin{equation}\label{eq:MPCC_stat_x_adjoint}
		\nabla_x F(\bar{x},\bar{y})
		+ \nabla_x G(\bar{x},\bar{y})^{\top}\bar{\mu}
		+ \bigl(\nabla_{yx}^2 \mathcal{L}_f\bigr)^{\top}\bar{\nu}
		+ (\nabla_x g_A)^{\top}\bar{\Lambda}_A\bar{w} = 0.
	\end{equation}
	
	Define MPCC multipliers:
	\begin{equation}\label{eq:MPCC_multipliers}
		\bar{\pi}_I = 0,\quad
		\bar{\pi}_A = \bar{\Lambda}_A\bar{w},\quad
		\bar{\xi} = \nabla_y g(\bar{x},\bar{y})\,\bar{\nu}.
	\end{equation}
	Since $\bar{\Lambda}_A$ is diagonal with strictly positive entries,
	$\bar{\pi}_A \geq 0$ if and only if $\bar{w} \geq 0$. Then
	\eqref{eq:MPCC_stat_x_adjoint} is the MPCC stationarity condition
	with respect to $x$ \eqref{eq:MPCC_stat_x}.
	
	Stationarity with respect to $y$ follows from the first block of
	\eqref{eq:adjoint_system}:
	\begin{equation}
		H_{\mathcal{L}_f}\bar{\nu}
		+ \nabla_y g_A(\bar{x},\bar{y})^{\top}\Lambda_A\bar{w}
		= -\Bigl(\nabla_y F(\bar{x},\bar{y})
		+ \nabla_y G(\bar{x},\bar{y})^{\top}\bar{\mu}\Bigr),
	\end{equation}
	and since $H_{\mathcal{L}_f} = \nabla_{yy}^2 \mathcal{L}_f$ and
	$\bar{\pi}_I = 0$, this is \eqref{eq:MPCC_stat_y}. From the second
	block of \eqref{eq:adjoint_system}, $\nabla_y g_A\,\bar{\nu} = 0$,
	which implies $\bar{\xi}_A = 0$; hence \eqref{eq:MPCC_stat_lambda}
	is satisfied.
	
	Primal feasibility of $(\bar{x},\bar{y},\bar{\lambda})$ holds by
	construction, and complementarity $(\bar{\mu} \geq 0,\
	\bar{\mu}^{\top}G = 0)$ is inherited from the KKT conditions of the
	implicit problem.
	
	To verify the \mbox{S--stationarity} sign conditions: $\bar{\xi}_A = 0$
	follows from the adjoint system, and $\bar{\pi}_I = 0$ holds by
	construction. For $i \in A$, $\bar{\pi}_i$ represents the sensitivity
	of $F$ to the lower-level constraint $g_i$; since $\bar{x}$ is a local
	minimum, relaxing $g_i$ cannot improve $F$, so $\bar{\pi}_A \geq 0$.
	For $i \in I$, $\bar{\xi}_i$ is the sensitivity to $\lambda_i = 0$;
	a negative $\bar{\xi}_i$ would imply the objective improves by
	activating $g_i$, contradicting optimality of $\bar{x}$, so
	$\bar{\xi}_I \geq 0$. The \mbox{S--stationarity} sign rules are
	therefore satisfied:
	$$i \in A:\ \bar{\pi}_i \geq 0,\ \bar{\xi}_i = 0;\qquad
	i \in I:\ \bar{\pi}_i = 0,\ \bar{\xi}_i \geq 0.$$
	This proves that $(\bar{x},\bar{y},\bar{\lambda})$ is
	\mbox{S--stationary} for \eqref{eq:BLP MPCC}.
	
	$(\Leftarrow)$ Conversely, suppose $(\bar{x},\bar{y},\bar{\lambda})$
	is S--stationary for the MPCC, with multipliers
	$(\bar{\mu},\bar{\nu},\bar{\pi},\bar{\xi})$ satisfying
	\eqref{eq:MPCC_stationarity}. Under SCC (Assumption~\ref{ass:main}),
	$\bar{\Lambda}_A$ is invertible, and we define
	\begin{equation}\label{eq:w_construction}
		\bar{w} := \bar{\Lambda}_A^{-1}\bar{\pi}_A.
	\end{equation}
	Since $\bar{\pi}_A \geq 0$ and $\bar{\Lambda}_A$ has strictly positive
	diagonal entries, $\bar{w} \geq 0$, so the sign rules are preserved.
	The MPCC stationarity conditions with respect to $y$
	\eqref{eq:MPCC_stat_y} and with respect to $\lambda$
	\eqref{eq:MPCC_stat_lambda}, together with \eqref{eq:w_construction},
	are equivalent to the adjoint system \eqref{eq:adjoint_system}. The
	MPCC stationarity condition with respect to $x$ \eqref{eq:MPCC_stat_x},
	combined with the identity \eqref{eq:adjoint-equality}, which follows
	from the adjoint system, directly yields the implicit problem's
	stationarity condition \eqref{eq:UL-stationarity}. Feasibility and
	complementarity transfer directly. Hence $(\bar{x},\bar{\mu})$ is a
	KKT point of the implicit problem \eqref{eq:Implicit_BLP}.
	
	In summary, the adjoint variables $(\bar{\nu},\bar{w})$ provide the
	MPCC multipliers via $\bar{\pi}_A = \bar{\Lambda}_A\bar{w}$ and
	$\bar{\xi} = \nabla_y g\,\bar{\nu}$, while the upper-level multipliers
	$\bar{\mu}$ are preserved, establishing the equivalence between the
	reduced upper-level KKT points and S-stationary solutions of the MPCC
	reformulation.
\end{proof}

\section{Computational Tests}\label{Sect:Tests}

This section presents the computational validation of the proposed sensitivity-based algorithm, covering implementation details, illustrative examples, and systematic benchmark experiments.

\subsection{Implementation Details}
The method was implemented in Python, leveraging CasADi
\citep{Andersson2019} for its automatic differentiation capabilities.
All computations were performed on a Windows 10 Enterprise (64-bit)
workstation equipped with an Intel i5-6500 CPU (4 cores) and 16 GB RAM.
The lower-level parametric NLPs were solved using IPOPT
\citep{Wachter2006}. The inner ALM subproblem \eqref{eq:subproblem} was
solved using the L-BFGS-B algorithm \citep{Byrd1995} with a strong Wolfe
line search. Upper-level gradients were computed via the adjoint formula
\eqref{eq:adjoint_gradient}, avoiding explicit formation of the Jacobian
$J$.

The Augmented Lagrangian method was selected over a pure penalty
approach because it avoids the severe ill-conditioning associated with
large penalty parameters, and over sequential quadratic programming (SQP)
because the implicit gradient structure of the reduced problem makes
reliable second-order information expensive to obtain. The strong Wolfe
line search was preferred over a simple backtracking Armijo rule because
the curvature condition \eqref{eq:wolfe_curvature} is required to
guarantee that the L-BFGS-B Hessian approximation remains positive
definite throughout the inner iterations.

The tolerances were set to $\epsilon = 10^{-5}$ for the KKT residual
\eqref{eq:res_KKT}, $\epsilon_{\mathrm{inner}} = 10^{-6}$ for the inner
solve, and $\epsilon_{\mathrm{stall}} = 10^{-5}$ for the stall
criterion. The initial penalty parameter was set to $\rho_0 = 10$ and
the initial upper-level multipliers to $\mu_0 = 0$. For lower-level
problems with linear objectives, $\varepsilon$-regularisation with
$\varepsilon = 10^{-6}$ was applied to enforce uniqueness of the
lower-level solution. A well-known practical feature of the Augmented
Lagrangian method is that primal variables often converge much more
rapidly than the dual variables, whose first-order update can exhibit
slow linear convergence \citep{Nocedal2006}. Therefore, while the
primary termination criterion is the KKT residual falling below
$\epsilon$, a secondary stall criterion terminates the algorithm when
successive changes in $x$ and $F$ fall below $\epsilon_{\mathrm{stall}}$,
preventing excessive iterations spent refining dual variables once no
meaningful primal improvement remains.

At each inner iterate $x_{k,j}$, the lower-level problem \eqref{eq:ll}
is solved to obtain the KKT pair $(\bar{y}_{k,j}, \bar{\lambda}_{k,j})$
(Algorithm~\ref{alg:ALM_revised}, Step~9). Existence and local
uniqueness of this solution are guaranteed by Assumption~\ref{ass:main}:
the LICQ, SCC, and SSOSC conditions together ensure strong regularity of
the lower-level KKT system, which implies that the solution mapping
$\bar{y}(x)$ is locally single-valued and continuously differentiable in
a neighborhood of each iterate, so the lower-level solve is well-posed
throughout the algorithm.

For each benchmark problem, multiple starting points were selected by
manual exploration within the reported variable bounds, informed by
inspection of the upper-level objective landscape. The
best solution found across all initializations is reported in
Table~\ref{tab:results}. We note that systematic space-filling designs commonly used for multi-start strategies in single-level optimization — such as Latin hypercube sampling or uniform grids — are not directly applicable to bilevel problems: the bilevel feasible region is implicitly defined through the lower-level solution mapping $\bar{y}(x)$, and feasibility of a candidate point with respect to the upper-level constraints $G(x,\bar{y}(x)) \leq 0$, can only be assessed after solving the lower-level problem at $x_0$. Exploration of the implicit objective landscape therefore necessarily proceeds by solving the lower-level problem at each candidate point, making manual exploration guided by problem structure a natural and practical approach for the small-dimensional benchmark problems considered here.

\subsection{Test Problems}
As an illustrative example, we first conduct a detailed analysis of the
classic \texttt{Clark\-Westerberg\-1990} problem \citep{Clark1990}, a
well-known benchmark in the chemical engineering literature. The problem
is defined as:
\begin{equation}
	\begin{aligned}
		\min_x & \ (x - 3)^2 + (y - 2)^2\\
		\text{s.t.} & \ 0 \leq x \leq 8\\
		& \ y \in \Psi(x) = \argmin_{y}\set*{(y - 5)^2;
			\begin{aligned}
				&-2x +y-1 \leq 0,\\
				& x-2y+2 \leq 0, \\
				& x+2y-14 \leq 0
		\end{aligned}}.
	\end{aligned}
\end{equation}

The geometry of this problem is illustrated in
Figure~\ref{fig:CW1990_landscape}. The implicit upper-level objective,
$F(x,\bar{y}(x))$, is continuous but non-smooth, with non-differentiable
kinks at $x=2$ and $x=4$ that correspond to changes in the lower-level
active set. This non-convex landscape gives rise to multiple optima,
including a global minimum at $x=1$ and two distinct local minima. This
sensitivity to the initial point underscores the necessity of a
multi-start strategy to systematically explore the solution space, a
characteristic feature of non-convex bilevel problems.

\begin{figure}[t]
	\centering
	\includegraphics[width=1.0\textwidth]{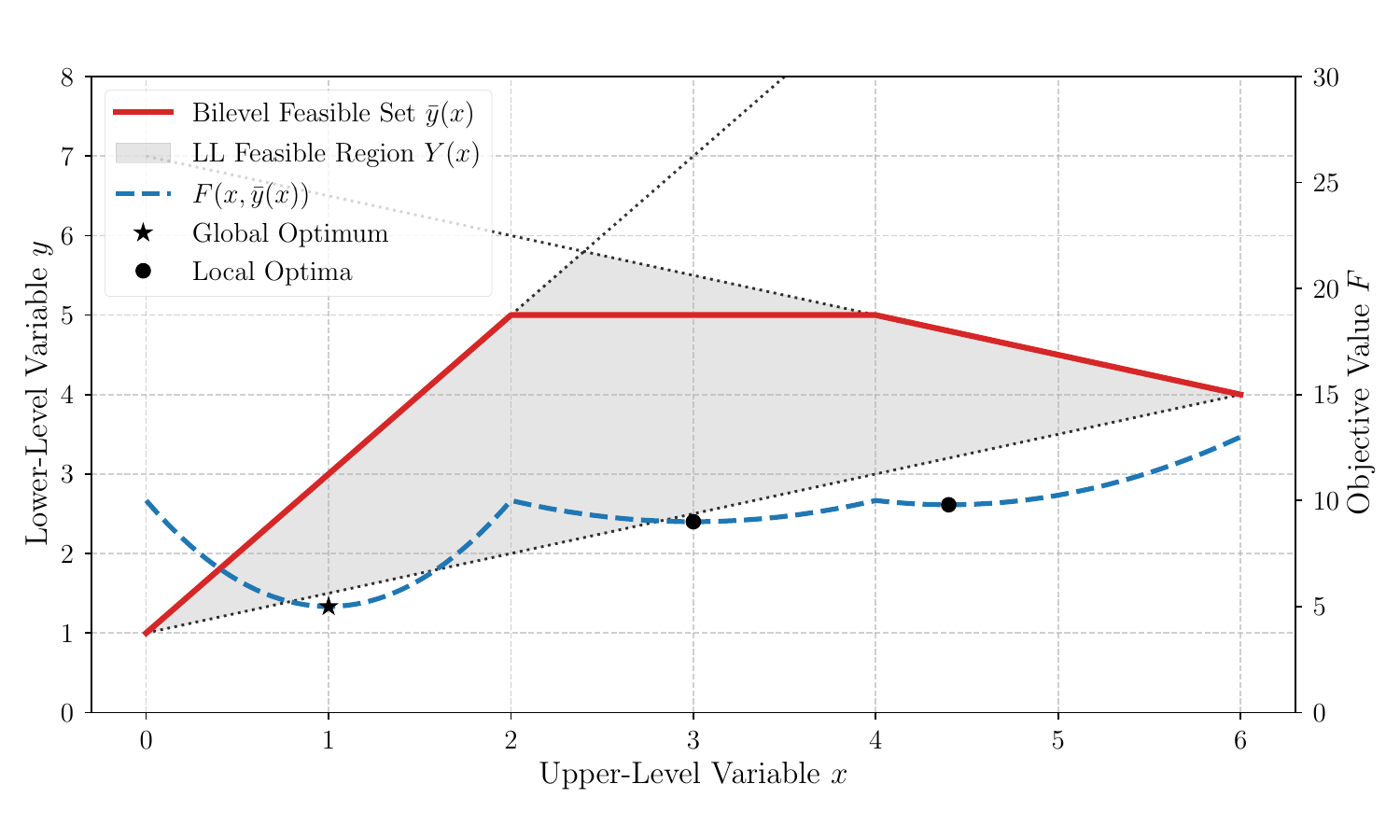}
	\caption{The implicit upper-level objective $F(x,\bar{y}(x))$ and
		the lower-level optimal response $\bar{y}(x)$ as a function
		of the upper-level variable $x$ for the
		\texttt{ClarkWesterberg1990} problem. The local and global
		optima are highlighted.}
	\label{fig:CW1990_landscape}
\end{figure}

To demonstrate the algorithm's performance on a problem with active
upper-level constraints, we present the convergence results for the
\texttt{Outrata\_Cervinka\_2009} problem in
Figure~\ref{fig:OC2009_plots}. This problem is defined as
\begin{equation}
	\begin{aligned}
		\min_x & \ -2x_1 - 0.5x_2 - y_2\\
		\text{s.t.} & \ x_1 \leq 0 \\
		& \ y \in \Psi(x) = \argmin_{y}\set*{y_1 - y_2 + x^\top y
			+ y^\top y;
			\begin{aligned}
				& y_2 - y_1 \leq 0,\\
				& y_2 + y_1 \leq 0, \\
				& y_2 \leq 0
		\end{aligned}}.
	\end{aligned}
\end{equation}
Unlike cases that terminate due to stalling, this problem demonstrates
convergence via the primary KKT criterion, as observed for several problems in Section~\ref{sect:discussion}. The plot shows the KKT
residual decreasing by several orders of magnitude to meet the
tolerance, while the upper-level multiplier $\mu_1$ converges rapidly
to its optimal value. This provides strong numerical evidence that the
algorithm performs as theoretically intended.

\begin{figure}[t]
	\centering
	\includegraphics[width=1.0\textwidth]{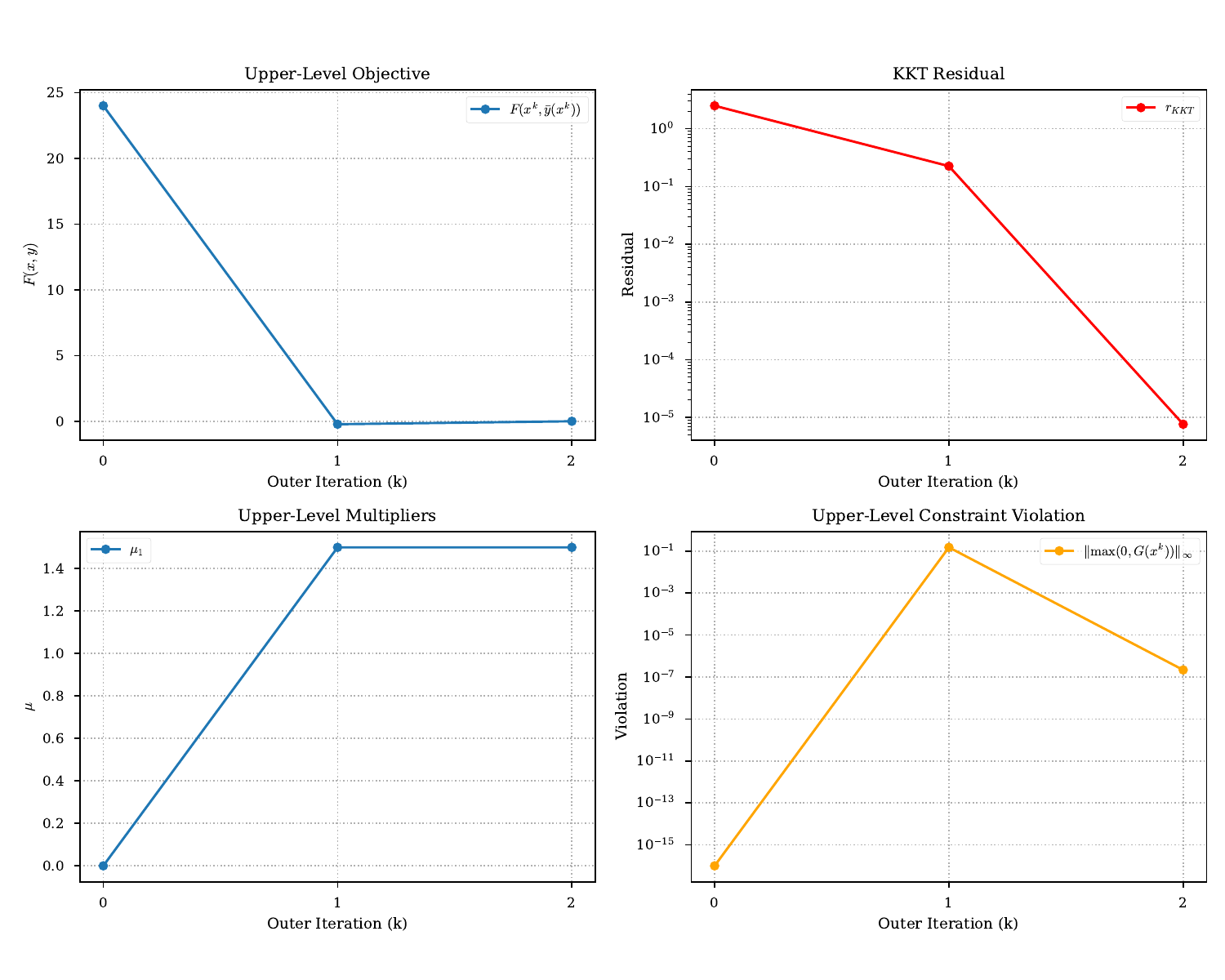}
	\caption{Convergence behavior for the \texttt{Outrata\_Cervinka\_2009}
		problem. The algorithm converges in a few iterations as the
		KKT residual drops below the tolerance $\epsilon = 10^{-5}$.}
	\label{fig:OC2009_plots}
\end{figure}

\subsection{Benchmark Results and Discussion}\label{sect:discussion}
To demonstrate the broader applicability and robustness of the proposed
method, the algorithm was tested on a suite of benchmark problems from
the BOLIB library \citep{Ward2025}. A summary of these computational
experiments is presented in Table~\ref{tab:results}. For each problem,
a multi-start strategy was employed, and the best solution found is
reported. The table details the problem dimensions as the tuple
$(n,m,r,s)$, where $n$ is the dimension of the upper-level decision
vector $x \in \mathbb{R}^n$, $m$ is the dimension of the lower-level
decision vector $y \in \mathbb{R}^m$, $r$ and $s$ are the number of
constraints in the upper and lower levels, respectively, including bound
constraints, as reported in the literature. It also reports the initial
point $x_0$ that led to the best solution, the known optimal value from
the literature $\bar{F}_r$, the value computed by the proposed method
$\bar{F}_c$, the number of outer ALM iterations, and the wall-clock
solution time in seconds.

\begin{table}[t]
\centering
\caption{Performance of the Sensitivity-Based ALM on Selected Benchmark Problems. Reported objective values are rounded to one decimal place; agreement with literature values is understood within the convergence tolerance $\epsilon = 10^{-5}$.}
\label{tab:results}
\rowcolors{2}{white}{gray!15}
\begin{tabular}{
		l
		c
		c
		S[table-format=3.1]  
		S[table-format=3.1]  
		S[table-format=2.0]  
		S[table-format=1.2]  
	}
	\toprule
	{Problem} & {($n,m,r,s$)} & {$x_0$} & {$\bar{F}_r$} & {$\bar{F}_c$} & {Iters} & {Time (s)} \\
	\midrule
	\texttt{AiyoshiShimizu1984Ex2} & $(2, 2,5,6)$ & $(20.0, 20.0)$ & 5.0 & 5.0 & 12 & 2.20 \\
	\texttt{AllendeStill2013} & $(2, 2,5,2)$ & $(2.0, 2.0)$ & -1.0 & -1.0 & 11 & 1.11 \\
	\texttt{Bard\_1988\_ex1} & $(1, 1,1,4)$ & $2.0$ & 17.0 & 17.0 & 11 & 0.91 \\
	\texttt{Bard\_1991\_ex1} & $(1, 2,2,3)$ & $4.0$ & 2.0 & 2.0 & 11 & 0.48 \\
	\texttt{Bard\_Book\_1998}$^\dagger$ & $(2, 2,4,7)$ & $(15, 15)$ & 0.0 & 0.0 & 6 & 0.28 \\
	\texttt{ClarkWesterberg1990} & $(1, 1,2,3)$ & $1.7$ & 5.0 & 5.0 & 11 & 0.33 \\
	\texttt{DempeEtal2012}$^\dagger$ & $(1, 1,2,2)$ & $0.9$ & -1.0 & -1.0 & 11 & 0.22 \\
	\texttt{Dempe\_Franke\_2011\_ex42}$^\dagger$ & $(2, 2,4,3)$ & $(-0.9, 0.9)$ & 3.0 & 3.0 & 11 & 2.75 \\
	\texttt{Dempe\_Lohse\_2011\_ex31a}$^\dagger$ & $(2, 2,0,4)$ & $(-0.4, -0.4)$ & -5.5 & -5.5 & 11 & 5.35 \\
	\texttt{Dempe\_Lohse\_2011\_ex31b}$^\dagger$ & $(3, 3,0,5)$ & $(4.0, 4.0, 4.0)$ & -12.0 & -12.0 & 12 & 1.13 \\
	\texttt{FloudasEtal2013} & $(2, 2,4,7)$ & $(10.0, 10.0)$ & 0.0 & 0.0 & 11 & 0.48 \\
	\texttt{Outrata\_Cervinka\_2009}$^\dagger$ & $(2, 2,1,3)$ & $(-10.0, -1.0)$ & 0.0 & 0.0 & 12 & 0.67 \\
	\texttt{Shimizu\_Aiyoshi\_1981\_ex2} & $(2, 2,3,4)$ & $(10.0, 1.0)$ & 225.0 & 225.0 & 39 & 4.20 \\
	\bottomrule
\end{tabular}
\parbox{\linewidth}{\footnotesize $^\dagger$ Lower-level objective 
	is linear; $\varepsilon$-regularisation with $\varepsilon = 10^{-6}$ 
	applied to enforce uniqueness of the lower-level solution.}
\end{table}

The results across the benchmark suite reveal several consistent
behavioral patterns. For problems with relatively simple landscapes,
the algorithm converges in very few outer iterations, reflecting the
efficiency of the quasi-Newton inner solver. Harder instances typically
exhibit rapid primal progress accompanied by slower dual convergence, a
well-known feature of the PHR multiplier update \eqref{eq:mu_update},
whose first-order scheme can exhibit slow linear convergence
\citep{Nocedal2006}. When constraints are strictly feasible at the
solution, the projection drives the corresponding multipliers to zero,
consistent with KKT conditions for inactive constraints. Near-binding
constraints, however, tend to produce small oscillations in the
multiplier trajectory and a slower decay of the complementarity
residual, making the stall criterion essential. This is illustrated by
\texttt{AiyoshiShimizu1984Ex2}, where the primal solution is identified
rapidly but convergence is declared via the stall criterion due to slow
dual refinement. The $\varepsilon$-regularisation stabilises lower-level
linear programme cases (marked by $^\dagger$ in Table~\ref{tab:results}) without altering solutions on regions where the
active set remains constant, as confirmed by the agreement between computed and literature values for these instances. Overall,
the results are consistent with the best-reported solutions in the
literature, demonstrating both the correctness and the robustness of the
proposed method.

Direct numerical comparison is not straightforward, as the sensitivity-based paradigm adopted here does not have a close algorithmic counterpart in the existing literature. Three alternative paradigms exist, each addressing different trade-offs between generality, computational cost, and solution quality.
Deterministic global methods based on bounding schemes, such as that of \cite{Mitsos2008} for continuous bilevel programmes with nonconvex lower levels, provide rigorous $\varepsilon$-optimality certificates. However, these methods require certified global optimality of the lower-level problem at every iteration, incurring substantial computational overhead that grows rapidly with problem dimension. For the smooth, locally well-posed problems considered here, this requirement is excessive given the analytic structure available, and such methods do not exploit the gradient information that this structure affords. Other deterministic approaches based on multiparametric programming \citep{Faisca2007,Avraamidou2019,Avraamidou2022} provide exact global solutions but are restricted to lower-level problems with linear or quadratic structure; they do not extend to the general smooth nonlinear lower levels addressed here. Furthermore, the explicit parametric solution map constructed by these methods scales combinatorially with the number of lower-level constraints and the dimension of the upper-level variable space, whereas the proposed method requires only a single lower-level NLP solve and one adjoint linear system per iteration, with per-iteration cost independent of the upper-level dimension $n$.

Data-driven methods such as DOMINO \citep{Beykal2020} and surrogate-assisted evolutionary algorithms \citep{Islam2017, Sinha2022} handle general nonlinear and black-box structures without requiring differentiability. However, these approaches construct a surrogate approximation of the upper-level objective from sampled lower-level solutions and optimize the surrogate in place of the original bilevel problem. The surrogate is not in general equivalent to the original bilevel structure: its optimum provides a feasible but only near-optimal solution, with no certificate on the gap to the true bilevel optimum \citep{Beykal2020}. Furthermore, surrogate accuracy degrades with problem dimension due to the curse of dimensionality, limiting scalability. In contrast, the proposed sensitivity-based method operates directly on the true bilevel problem without surrogate approximation, and exploits the available gradient structure therefore the computational effort is concentrated in solving the lower-level NLP and one adjoint linear system per gradient evaluation, both of which exploit the analytic structure of the problem rather than approximating it.

\section{Conclusions}\label{Sect:Conclusions}

In this work, a novel sensitivity-based algorithm for solving
continuous, optimistic bilevel optimization problems was developed. By
treating the lower-level problem as an implicit function of the
upper-level variables, this approach addresses the hierarchical
structure of BLPs, avoiding the need for classical KKT or
value-function reformulations. The method was embedded within a robust
Augmented Lagrangian framework, providing a theoretically sound and
practical tool for solving this challenging class of optimization
problems. Gradient evaluation via an adjoint system avoids explicit formation of the sensitivity Jacobian, reducing per-iteration cost independently of the upper-level dimension, and the computed KKT points are shown to be equivalent to S-stationary solutions of the associated MPCC reformulation under MPEC-LICQ.

Computational experiments on a suite of benchmark problems demonstrated
the effectiveness and efficiency of the proposed method. The results
highlighted the critical role of the architectural choice for the
inner-loop solver (Algorithm~\ref{alg:ALM_revised}); the use of a
robust quasi-Newton method (L-BFGS-B) proved essential for handling the
ill-conditioned and non-smooth subproblems that arise. The analysis of
the problem landscapes confirmed the non-convexity inherent in BLPs,
underscoring the necessity of a multi-start strategy. Furthermore, the
implemented dual-criterion stopping condition proved to be a robust and
efficient solution to the practical challenge of asymmetric convergence
rates between primal and dual variables in the Augmented Lagrangian
method.

Several directions remain open for future research. The most immediate is the extension to non-convex lower-level problems. Unlike global bounding approaches, which require certified global optimality of the lower level at every iteration, the sensitivity-based structure is amenable to extensions that admit locally optimal lower-level solutions — for instance through multi-start strategies at the lower level or branching on the lower-level feasible region — at the cost of weaker guarantees on the bilevel solution. A second direction concerns scalability: the benchmark suite used in this work consists of low-dimensional problems standard in the BLP literature, and the extension to larger-scale chemical engineering problems remains a primary avenue for future work, since the adjoint-based gradient structure offers the most significant computational advantage over reformulation-based approaches precisely in high upper-level dimensions. Additionally, exploring second-order update schemes for the dual variables could accelerate convergence and warrants further investigation. Finally, the multi-start procedure used here relies on manual exploration of the upper-level variable bounds; the development of a systematic initialization strategy adapted to the implicit structure of the bilevel feasible region remains an open direction.

\section*{CRediT authorship contribution statement}
\textbf{Eduardo Nolasco:} Writing – review \& editing, Writing – original draft, Visualization, Validation, Software, Methodology, Investigation, Formal analysis, Data curation, Conceptualization. \textbf{Ross D. King:} Supervision. \textbf{Vassilios S. Vassiliadis:} Conceptualization, Supervision.

\section*{Declaration of competing interest}
The authors declare that they have no known competing financial interests or personal relationships that could have appeared to influence the work reported in this paper.

\section*{Acknowledgments}
The first author acknowledges the CONACYT-SENER for financial support and Dr. Ehecatl Antonio del R\'{i}o Chanona for helpful discussions on improving the presentation of the manuscript.

\bibliographystyle{elsarticle-harv}
\bibliography{BLP_references} 
	
\end{document}